\documentclass[a4paper,11pt]{article}

\usepackage[utf8]{inputenc}
\usepackage{amsmath}
\usepackage{amsfonts}
\usepackage{amssymb}
\usepackage{amsthm}
\usepackage{color}
\usepackage{esint}
\usepackage{graphicx}
\usepackage{comment}
\usepackage[a4paper,textwidth=14cm]{geometry}
\usepackage{enumitem}

\newcommand{\B}{\mathcal B}
\newcommand{\C}{\mathcal C}
\newcommand{\E}{\mathcal{E}}

\renewcommand{\H}{\mathcal{H}}
\newcommand{\N}{\mathbb{N}}
\newcommand{\R}{\mathbb{R}}
\renewcommand{\S}{\mathbb{S}}
\newcommand{\T}{\mathcal{T}}

\renewcommand{\div}{\mathrm{div}\,}
\newcommand{\diam}{\, \mathrm{diam}\,}

\newcommand{\eps}{\varepsilon}
\newtheorem{theorem}{Theorem}[section]
\newtheorem{lemma}[theorem]{Lemma}
\newtheorem{proposition}[theorem]{Proposition}
\newtheorem{corollary}[theorem]{Corollary}
\newtheorem{definition}[theorem]{Definition}

\theoremstyle{definition}
\newtheorem{remark}[theorem]{Remark}

\usepackage[hidelinks]{hyperref}

\title{Sticky-disk limit of planar $N$-bubbles}
\author{Giacomo Del Nin\thanks{Università di Pisa, \url{delnin@mail.dm.unipi.it}.}}
\date{}
\begin{document}
\maketitle

\begin{abstract}
We study planar $N$-bubbles that minimize, under an area constraint, a weighted perimeter $P_\eps$ depending on a small parameter $\eps>0$. Specifically we weight $2-\eps$ the boundary between the bubbles and $1$ the boundary between a bubble and the exterior. We prove that as $\eps\to 0$ minimizers of $P_\eps$ converge to configurations of disjoint disks that maximize the number of tangencies, each weighted by the harmonic mean of the radii of the two tangent disks. We also obtain some information on the structure of minimizers for small $\eps$.\\

{\footnotesize \emph{Keywords}: planar clusters, weighted perimeter, isoperimetric inequality, sticky~disk.

\emph{Mathematics Subject Classification (2010)}: 49J40, 49J45, 51M16.}
\end{abstract}

\section{Introduction}
In this work we are interested in studying the optimal way to enclose and separate $N$ areas in the plane in order to minimize a specific weighted perimeter.

An $N$-bubble, or $N$-cluster, is a family $\E=(\E(1),\ldots \E(N))$ of disjoint sets in the plane, called \emph{bubbles} or \emph{chambers}, that can touch only at their boundaries. The weighted perimeter of an $N$-bubble is given by the weighted sum of the lengths of all the interfaces, that is
\begin{equation}\label{eq:P}
P(\E)=\frac12 \sum_{\substack{0\leq i,j\leq N\\i\neq j}} c_{ij} \,\mathrm{length}\big( \partial \E(i)\cap \partial \E(j)\big)
\end{equation}
for some fixed positive weights $c_{ji}=c_{ij}>0$.  In the following we will fix the areas $m_1,\ldots,m_N$ of the bubbles and seek the configurations that minimize the perimeter $P(\E)$ under this constraint.

The exact characterization of perimeter minimizing $N$-bubbles is currently known only in very few situations. The case $N=1$ is the classical isoperimetric problem, whose well-known solution is a disk. If $N=2$ the solution is the \emph{standard weighted double bubble} made of three circular arcs meeting in two triple points forming angles which depend on the specific weights (see \cite{F+93} in the case of unit weights, \cite{Law12} in general). If $N=3$ the solution is known only for equal weights ($c_{ij}=1$), and it is the \emph{standard triple bubble} made of six circular arcs meeting in four points \cite{Wic04}. When $N=4$ and the weights are equal the minimal configuration has a determined topology and is conjectured to be the \emph{symmetric sandwich} \cite{PaoTam16}.

For general $N$ only existence and regularity of minimizers is known: under the strict triangle inequalities $c_{ij}< c_{ik}+c_{kj}$ for any distinct $i,j,k$, minimizers exist and their boundary is made of a finite number of circular arcs, meeting at a finite number of singular points where they satisfy a condition on the incidence angles \cite[Proposition~4.3]{Mor98}.

The exact characterization of minimizers seems an intractable problem already for small values of $N$. For this reason, in this work we consider a special asymptotic regime. Indeed for $\eps\geq 0$ we define
\begin{align}\label{eq:Peps}
\begin{split}
P_\eps(\E)&=\frac12 \sum_{\substack{0\leq i,j\leq N\\ i\neq j}}c_{ij}(\eps)\mathrm{length} (\partial \E(i)\cap \partial \E(j)),\\
c_{ij}(\eps)&=
\begin{cases}
1& \text{ if } i=0 \text{ or } j=0\\
2-\eps & \text{ if } i,j\neq 0
\end{cases}.
\end{split}
\end{align}

{\bf Problem} We want to study the asymptotic behaviour as $\eps \to 0$ of $N$-bubbles which minimize the energy $P_\eps$ with an area constraint $|\E(i)|=m_i$ for $i=1,\ldots, N$.\\

We denote by $\overline \E_\eps$ minimizers of $P_\eps$. We call a \emph{cluster of disks} any cluster made of disks with pairwise disjoint interiors.

\begin{proposition}[First-order behaviour]\label{prop:round}
As $\eps\to 0$ minimizers of $P_\eps$ converge to a cluster of disks.
\end{proposition}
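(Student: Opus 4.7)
The plan is to show that every bubble of the minimizer has isoperimetric deficit of order $\eps$, and then to deduce via stability of the isoperimetric inequality and a compactness argument that the limit is a cluster of disks. As a first competitor take a cluster of $N$ disjoint disks of areas $m_1,\ldots,m_N$ placed far apart in the plane. Since all their interfaces are with the exterior (weight $1$), this gives the uniform upper bound
\[
P_\eps(\overline{\E}_\eps)\le 2\sqrt{\pi}\sum_{i=1}^N \sqrt{m_i}=:M.
\]

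Let $L_i$ denote the total perimeter of $\overline{\E}_\eps(i)$ and $L_{ij}$ the length of the interface with chamber $j$ (with $j=0$ meaning the exterior). The identity $\sum_i L_i=\sum_i L_{i0}+2\sum_{i<j}L_{ij}$ lets one rewrite
\[
P_\eps(\overline{\E}_\eps)=\sum_i L_i-\eps\sum_{i<j}L_{ij}.
\]
In particular $(2-\eps)\sum_{i<j}L_{ij}\le P_\eps(\overline{\E}_\eps)\le M$, so $\sum_{i<j}L_{ij}$ is bounded uniformly in $\eps\in[0,1)$. Combining this with the classical isoperimetric inequality $L_i\ge 2\sqrt{\pi m_i}$ and the upper bound above yields
\[
0\le \sum_i \bigl(L_i-2\sqrt{\pi m_i}\bigr)\le \eps\sum_{i<j}L_{ij}=O(\eps),
\]
so the isoperimetric deficit of each bubble tends to $0$ as $\eps\to 0$.

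The quantitative isoperimetric inequality of Fusco--Maggi--Pratelli --- or, qualitatively, compactness of sets of finite perimeter together with uniqueness of the disk as isoperimetric minimizer --- then implies that, up to a translation, each $\overline{\E}_\eps(i)$ converges in $L^1$ to a disk of area $m_i$. Since the total perimeter is uniformly bounded by $M$, each connected component of $\bigcup_i \overline{\E}_\eps(i)$ has uniformly bounded diameter. Translating each connected component suitably and extracting a subsequence produces a limit cluster made of $N$ disks with pairwise disjoint interiors: the disjointness is inherited from the a.e.\ disjointness of the bubbles via $L^1$ convergence, and the prescribed areas prevent distinct disks from collapsing onto one another.

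The delicate point is the compactness step: a priori the cluster may split into far-apart pieces, so convergence has to be understood up to choosing a separate translation on each connected component, and one must also rule out that in the limit a bubble degenerates or splits into several disks. Formalizing this is where the main technical work lies, while the deficit estimate itself is a direct consequence of comparing the global upper bound $M$ with the bubble-by-bubble isoperimetric lower bound.
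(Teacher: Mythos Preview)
Your deficit estimate is correct and is essentially the same computation the paper does: using the rewriting $P_\eps(\E)=\sum_i P(\E(i))-\eps\sum_{i<j}\H^1(\E(i,j))$ (equivalently \eqref{eq:Pepsrewriting}) together with the disjoint-disks competitor, one gets $P(\overline{\E_\eps}(i))\le 2\sqrt{\pi m_i}(1+O(\eps))$, and then by lower semicontinuity (or, as you say, by Fusco--Maggi--Pratelli) each chamber must limit onto a disk. So on this point your argument and the paper's coincide.

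The genuine gap is exactly the one you flag in your last paragraph: you do not establish compactness of the cluster as a whole. Your proposal to ``translate each connected component suitably'' does not give $L^1$-convergence of the cluster in the sense used here (one global rigid motion), and without that the statement ``minimizers converge to a cluster of disks'' is not proved---you only get that each chamber, after its \emph{own} translation, is close to a disk. For the second-order analysis this would be fatal, since Theorem~\ref{thm:sticky} speaks precisely about the tangency pattern of the \emph{actual} limit cluster.

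The paper closes this gap in a way you do not mention: Lemma~\ref{lemma:compactness} shows that the union $\bigcup_i\overline{\E_\eps}(i)$ is \emph{connected}. The argument uses regularity of minimizers together with the triple-point property (Lemma~\ref{lemma:triplepoints}): if the union had two components one could slide them until they touch, creating a vertex of multiplicity at least four, contradicting minimality. Connectedness plus the perimeter bound then gives a uniform diameter bound (for connected planar sets, $\mathrm{diam}\le\tfrac12 P$), so a single translation and a subsequence suffice for compactness. This is the missing ingredient in your sketch.
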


\begin{figure}\label{fig:Asymptotic_N-bubbles}
\centering
\includegraphics[width=0.7\textwidth]{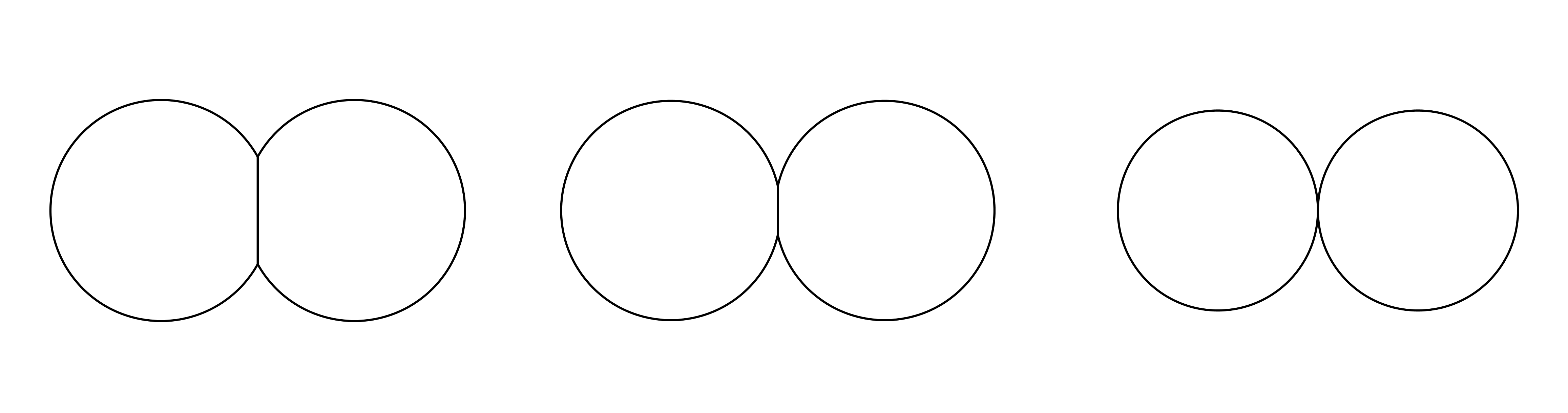}
\caption{\small{When $N=2$ we know the explicit shape of the (unique) minimizers of $P_\eps$, and as $\eps\to 0$ (from left o right) they converge to two tangent disks. Depicted here is the case of equal areas.}}
\end{figure}

At this level however we have no information on the disposition of the limit disks, since any collection of disks with pairwise disjoint interiors is a possible candidate. On the other hand we expect to see only certain configurations of disks: if we look for instance at the case $N=2$ with equal areas the limit disks must be tangent (see Figure \ref{fig:Asymptotic_N-bubbles}). To obtain more information we then perform a second-order expansion of the perimeter functional, that is we subtract the limit energy $P_0(\B)=\sum_{i=1}^N P(B_i)$, rescale by the right power of $\eps$ and analyze these rescaled functionals. To find the right scaling we look again at the completely solved case of two bubbles with equal areas $|\E(1)|=|\E(2)|=\pi$: an explicit computation shows that
\begin{equation}\label{eq:explicit}
P_\eps(\overline{\E_\eps})=4\pi -\frac43 \eps^{3/2} +O(\eps^{5/2})
\end{equation}
hence the relevant next order is $\eps^{3/2}$ and we are led to consider the rescaled functionals
\begin{equation}\label{eq:P1eps}
P^{(1)}_\eps(\E):=\frac{P_\eps(\E)-P_0(\B)}{\frac43\eps^{3/2}}.
\end{equation}
Of course they have the same minimizers as $P_\eps$ but allow us to analyze the finer behaviour at scale $\eps^{3/2}$. We expect that, as in the case of the double bubble, these functional “see” the tangency points in the limit cluster $\B$. Indeed, this is precisely what happens. The following is the main result of this work:

\begin{theorem}[Sticky-disk limit]\label{thm:sticky}
As $\eps\to 0$ minimizers $\overline{\E_\eps}$ of $P_\eps$ converge up to subsequence and rigid motions to a cluster of disks that maximizes the number of contact points among the disks, each contact point counted with factor $\frac{r_ir_j}{r_i+r_j}$, where $r_i,r_j$ are the radii of the touching disks.
\end{theorem}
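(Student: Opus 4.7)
The plan is to run a $\Gamma$-convergence argument for the rescaled functionals $P^{(1)}_\eps$ of \eqref{eq:P1eps} with respect to $L^1$ (equivalently, Hausdorff) convergence of clusters. The candidate $\Gamma$-limit, defined on clusters of disks $\B=(B_1,\ldots,B_N)$ of areas $m_i$ and radii $r_i$, is
\[
\F(\B) \;=\; -c\sum_{\{i,j\}:\, B_i \text{ and } B_j \text{ tangent}}\frac{r_i r_j}{r_i+r_j},
\]
where the positive constant $c$ is fixed by matching with \eqref{eq:explicit} in the equal-radii $N=2$ case. By Proposition~\ref{prop:round}, any sequence of minimizers $\overline{\E}_\eps$ converges, along a subsequence and up to rigid motions, to some cluster of disks $\B_0$, so the theorem reduces to showing that $\B_0$ minimizes $\F$ among all clusters of disks with the prescribed areas.

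For the upper bound (recovery sequence) I fix a cluster of disks $\bar\B$ of areas $m_i$ and build competitors $\E_\eps$ with $|\E_\eps(i)|=m_i$ satisfying $\limsup_\eps P^{(1)}_\eps(\E_\eps)\leq \F(\bar\B)$. The construction is local around each tangency $p_{ij}$ of $\bar\B$: inside a small ball around $p_{ij}$ I replace the two tangent circular arcs by the standard weighted double bubble between disks of radii $r_i,r_j$, with weights $2-\eps$ on the internal interface and $1$ on the external arcs, as prescribed by \eqref{eq:Peps}. This is the direct unequal-radii generalization of the $N=2$ picture in Figure~\ref{fig:Asymptotic_N-bubbles}. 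Repeating the explicit Taylor expansion that yields \eqref{eq:explicit} in this unequal-radii setting gives a local perimeter gain of $\tfrac43\eps^{3/2}\tfrac{r_i r_j}{r_i+r_j}+o(\eps^{3/2})$ per tangency (the harmonic factor appearing through the limiting incidence-angle condition at the triple points). Performing this at all tangencies and correcting the small area mismatch by a homothety of each chamber by a factor $1+O(\eps^{3/2})$, whose energetic cost is $o(\eps^{3/2})$, completes the upper bound.

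For the lower bound I need $\liminf_\eps P^{(1)}_\eps(\overline{\E}_\eps)\geq \F(\B_0)$. I localize with a small parameter $\delta>0$: outside $\bigcup_{ij} B(p_{ij},\delta)$ the chambers of $\overline{\E}_\eps$ converge to disjoint circular profiles, and a quantitative isoperimetric inequality applied chamber by chamber shows that the perimeter excess carried there is $o(\eps^{3/2})$. Inside each $B(p_{ij},\delta)$, the regularity of $\overline{\E}_\eps$ \cite[Proposition~4.3]{Mor98} forces the cluster to be a finite union of circular arcs meeting at triple points with angles fixed by the $c_{ij}(\eps)$, and these angles are $O(\sqrt\eps)$-close to those of a genuine tangency of disks of radii $r_i,r_j$. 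A blow-up at scale $\sqrt\eps$ around $p_{ij}$ converges to the canonical weighted double-bubble problem between two tangent disks of radii $r_i,r_j$, and matching with the disk boundary data on $\partial B(p_{ij},\delta)$ yields the sharp local lower bound $\tfrac43\eps^{3/2}\tfrac{r_i r_j}{r_i+r_j}-o(\eps^{3/2})$, which summed over tangencies gives $\F(\B_0)$.

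The two bounds combine as in any $\Gamma$-convergence-to-maximum argument: if $\bar\B^*$ maximizes the harmonic-weighted tangency sum, then minimality of $\overline{\E}_\eps$ and the recovery sequence give $\limsup_\eps P^{(1)}_\eps(\overline{\E}_\eps)\leq \F(\bar\B^*)$, whereas the lower bound gives $\liminf_\eps P^{(1)}_\eps(\overline{\E}_\eps)\geq \F(\B_0)$; hence $\F(\B_0)\leq \F(\bar\B^*)$ and $\B_0$ is itself a maximizer. The hard part is the lower bound, and the delicate points are to rule out (a) thin necks of exterior chamber separating two chambers whose limit disks are tangent, (b) spurious small bubbles or third-chamber arcs inside $B(p_{ij},\delta)$ that could artificially increase the local saving, and (c) residual mismatch between the blow-up limit and the global minimizer along $\partial B(p_{ij},\delta)$. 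Items (a)–(b) should follow from Morgan's regularity together with the first-order convergence of Proposition~\ref{prop:round}, while (c) requires a quantitative, energy-conserving comparison with the explicit local double bubble and is the technical heart of the argument.
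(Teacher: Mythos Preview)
Your overall $\Gamma$-convergence scaffolding (compactness via Proposition~\ref{prop:round}, a recovery sequence for the $\limsup$, and a $\liminf$ inequality, then conclude minimality of the limit) is exactly what the paper does, and your recovery sequence is close in spirit to Theorem~\ref{thm:recovery}: the paper also inserts, near each tangency $p_{ij}$, a short arc of the asymptotically optimal curvature $\tfrac12(\kappa_{E_j}-\kappa_{E_i})$ and chord length $\tfrac{4r_ir_j}{r_i+r_j}\eps^{1/2}$, and then fixes the area. One caveat: correcting area by a homothety of each chamber is dangerous, since independent dilations of neighbouring chambers generically produce overlaps; the paper instead adjusts the free (exterior) part of $\partial\E_\eps(i)$ in polar coordinates with a $W^{1,\infty}$ perturbation of size $O(\eps^2)$, which keeps the chambers disjoint and costs only $O(\eps^{5/2})$.

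The substantive divergence is in the lower bound. The paper does \emph{not} localize into balls $B(p_{ij},\delta)$ and blow up; it proves a global ``curvature-deficit'' isoperimetric inequality (Theorem~\ref{thm:isop}): if $\partial E$ contains arcs of curvatures $\kappa_1,\dots,\kappa_m$ with chords $\ell_1,\dots,\ell_m$, then
\[
P(E)\ \ge\ \sqrt{4\pi|E|}\;+\;\frac{1}{24}\sum_i \ell_i^3(\kappa_i-\kappa_E)^2\;-\;O\Big(\sum_i\ell_i^5\Big).
\]
Applying this to every chamber and then minimizing analytically in $\kappa_{ij}$ and $\ell_{ij}$ (Proposition~\ref{prop:keyineq}) produces the sharp $-\tfrac43\eps^{3/2}\sum\sigma_{ij}\tfrac{2r_ir_j}{r_i+r_j}$ with no blow-up and no boundary-matching. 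The preparatory Lemmas~\ref{lemma:localizationsimple}--\ref{lemma:onearc} are used only to ensure the hypotheses (bounded curvatures, at most one arc per pair) are met.

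Your blow-up route is not obviously wrong, but as written it has real gaps. First, the sentence ``a quantitative isoperimetric inequality applied chamber by chamber shows that the perimeter excess carried [outside $\bigcup B(p_{ij},\delta)$] is $o(\eps^{3/2})$'' is unjustified: the standard quantitative isoperimetric inequality controls the Fraenkel asymmetry, not the localized perimeter excess, and a priori the $O(\eps^{3/2})$ deficit could sit entirely outside the balls. Second, the blow-up limit at scale $\sqrt\eps$ is not ``the canonical weighted double-bubble between two tangent disks of radii $r_i,r_j$'': after rescaling, the curvatures of the outer arcs tend to $0$, so the limit is a problem between two \emph{half-planes}, and you would still have to identify its minimizer and quantify the convergence to get the constant $\tfrac{r_ir_j}{r_i+r_j}$. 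Third, your items (a)--(b) are precisely what the paper spends Lemmas~\ref{lemma:localizationsimple}, \ref{lemma:kappabound}, \ref{lemma:onearc} establishing; they do not fall out of Morgan's regularity plus Proposition~\ref{prop:round} alone. If you want to salvage the localization idea, the cleanest fix is to replace the blow-up by the paper's curvature-deficit inequality, which already encodes the sharp local saving and makes the inside/outside split unnecessary.
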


\begin{remark}
Theorem \ref{thm:sticky} selects, among all possible clusters of disks with the right area constraint, those which maximize the number of (weighted) tangencies; equivalently, those which minimize the following \emph{tangency functional}
\begin{equation}\label{eq:tangency}
\T(\E)=\begin{cases}-\displaystyle\!\!\sum_{1\leq i<j\leq N} \sigma_{ij}\frac{2r_ir_j}{r_i+r_j} & \text{ if $\E=(B_1,\ldots, B_N)$ is a cluster of disks}\\
+\infty & \text{ otherwise }
\end{cases}
\end{equation}
where $r_i$ is the radius of the disk $B_i$ and
\[\sigma_{ij}=\begin{cases}1&\text{ if $B_i$ and $B_j$ touch }\\
0& \text{ otherwise}
\end{cases}.
\]
In the case of equal radii, the tangency functional $\T$ coincides, up to a suitable rescaling, with the energy of $N$ particles associated to the centers of $B_i$ and interacting by means of the \emph{sticky disk} (or \emph{Heitmann-Radin}) potential
\[
V(r)=\begin{cases}
+\infty & \text{if } r<1\\
-1 & \text{if } r=1\\
0 & \text{if } r>1
\end{cases},
\]
hence the name of the Theorem above. Heitmann and Radin proved in \cite{HR80} that minimizers for the sticky disk with a fixed number of particles $N$ are crystallized, that is they form a subset of the triangular lattice. Moreover as $N\to\infty$ the global shape of minimizers converges to a hexagon \cite{AFS12}, \cite{Sch13}, \cite{DPS16}. In view of Theorem \ref{thm:sticky} this translates in the context of $N$-clusters minimizing $P_\eps$ in the following information: if we first send $\eps\to 0$ and then $N\to\infty$ we obtain as an asymtptotic global shape a hexagon. If it were possible to exchange the order of the limits we would obtain that, for sufficiently small $\eps$, the global shape of $N$-clusters minimizing $P_\eps$ is almost hexagonal in the limit $N\to\infty$. This would give a partial answer in the case of weighted clusters to a question considered by Cox, Morgan and Graner \cite{CMG13} about the global shape of minimal $N$-clusters for large $N$, and it was actually the initial motivation for this work.
\end{remark}

The main ingredient in the proof of Theorem \ref{thm:sticky} is the lower-bound inequality given by Theorem \ref{thm:isop}, which can be seen as an asymptotic quantitative isoperimetric inequality involving the “curvature deficit” of the boundary.

Finally, as a byproduct of the proof of Theorem \ref{thm:sticky}, we also obtain information on the structure of minimizers $\overline{\E_\eps}$ for small $\eps$:

\begin{theorem}[Structure of minimizers]\label{thm:structure}
Minimizing clusters $\overline{\E_\eps}$ have the following properties: let $\B=(B_1,\ldots,B_N)$ be a cluster of disks with radii $r_1,\ldots ,r_N$ to which $\overline{\E_\eps}$ converge; then for small $\eps>0$, in addition to the standard regularity given by Theorem \ref{thm:regularity}, the following hold: 

\begin{itemize}[label=\raisebox{0.25ex}{\tiny$\bullet$}]
\item each chamber is connected;
\item different arcs can meet only in a finite number of triple points, and when this happens exactly one of the chambers meeting there is the exterior one. In particular, the angles formed at a triple point are $2\theta_\eps,\pi-\theta_\eps,\pi-\theta_\eps$, where $\theta_\eps=\arccos\left(1-\frac{\eps}{2}\right)$.
\item between each pair of chambers $\overline{\E_\eps}(i)$ and $\overline{\E_\eps}(j)$ such that $B_i$ and $B_j$ are tangent, there is a single arc of constant curvature $\kappa_{ij}^\eps$ and length of respective chord $\ell_{ij}^\eps$ where
\[
\kappa_{ij}^\eps=\frac12\left(\frac{1}{r_j}-\frac{1}{r_i}\right)+o(1)\qquad\text{and}\qquad
\ell_{ij}^\eps=\frac{4r_i r_j}{r_i+r_j}\eps^{1/2}+o(\eps^{1/2})
\]
while in the remaining portion of the boundaries, that is between any chamber $\overline{\E_\eps}(i)$, $i\geq 1$, and the exterior  $\overline{\E_\eps}(0)$, there is an arc of curvature $\kappa_i^\eps=\frac{1}{r_i}(1+o(1))$.
\end{itemize} 
\end{theorem}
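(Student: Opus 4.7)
The plan is to combine the convergence $\overline{\E_\eps}\to\B$ given by Theorem~\ref{thm:sticky} with the cluster regularity of Theorem~\ref{thm:regularity}, Young's force-balance law at each triple point, and a local analysis near every tangency of $\B$. As a preliminary, I apply the law of cosines to the force triangle at any triple point of $\overline{\E_\eps}$: if the three chambers meeting there are the exterior and two interior ones $i,j$, the tensions are $(2-\eps,1,1)$ and a direct computation gives interior angles $2\theta_\eps$ (exterior) and $\pi-\theta_\eps$ (for each of $i,j$), with $\theta_\eps=\arccos(1-\eps/2)$; whereas if all three chambers are interior, all tensions equal $2-\eps$ and the three angles are $2\pi/3$.

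The main structural step is then to exclude, for small $\eps$, the all-interior triple point, disconnected chambers, and more than one arc between a tangent pair. Since every interface between two interior chambers must, by the convergence to the disk cluster, be concentrated in a shrinking neighborhood of a single tangency point $P_{ij}$ of $\B$, a triple point of the all-interior type would force (along a subsequence) three pairwise tangencies to occur at the same point, which is incompatible with $B_i,B_j,B_k$ having pairwise disjoint interiors. For the remaining two properties I would compare with explicit competitors: an extra connected component of $\overline{\E_\eps}(i)$ of area $a$ carries weighted perimeter at least $2\sqrt{\pi a}$, whereas reabsorbing it into the main component via a small rescaling costs only $O(a)$ in perimeter, yielding a net saving of order $\sqrt{a}$; an analogous competitor eliminates any second neck between tangent chambers. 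Minimality then forces each chamber to be connected and each tangent pair to be joined by a single arc.

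With the topology under control, the curvatures follow from introducing the chamber pressures (Lagrange multipliers) $p_0,\dots,p_N$ with $p_0=0$ and the Young--Laplace identity $c_{ij}(\eps)\,\kappa_{ij}^\eps=p_i-p_j$. The $C^1$-convergence of the exterior arcs to $\partial B_i$ gives $\kappa_i^\eps=p_i=1/r_i+o(1)$, and substituting yields $\kappa_{ij}^\eps=(p_i-p_j)/(2-\eps)=\tfrac12(1/r_i-1/r_j)+o(1)$, matching the stated expression up to the orientation convention used to fix the sign of $\kappa_{ij}^\eps$. For the chord length I would place $P_{ij}$ at the origin with the centers of $B_i,B_j$ on the $x$-axis, parametrise $\partial B_i,\partial B_j$ to second order near $P_{ij}$, and evaluate the tangent vectors at the upper triple point $T_+=(x_+,y_+)$ of the two arcs converging to $\partial B_i$ and $\partial B_j$: to leading order they read $(-y_+/r_i,1)$ and $(y_+/r_j,1)$, so the cosine of the angle between them (the angle inside chamber $0$) is $1-\tfrac{y_+^2}{2}(1/r_i+1/r_j)^2+O(y_+^4)$. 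Equating this with $\cos(2\theta_\eps)=1-2\eps+O(\eps^2)$ gives $y_+=\tfrac{2r_ir_j}{r_i+r_j}\sqrt{\eps}+o(\sqrt{\eps})$; the analogous calculation at $T_-$ combined with the reflection symmetry across the line of centers yields $y_-=-y_++o(\sqrt{\eps})$, whence $\ell_{ij}^\eps=y_+-y_-+o(\sqrt{\eps})=\tfrac{4r_ir_j}{r_i+r_j}\sqrt{\eps}+o(\sqrt{\eps})$.

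I expect the main obstacle to be the topological step: producing area-preserving competitors sharp enough to exclude pathological features (triple points with three interior chambers, disconnected chambers, multiple necks) requires the perimeter savings from removing small components or spurious necks to be quantified precisely and matched against the energy expansion of Theorem~\ref{thm:sticky}. Once the topology is fixed, the angle, curvature and chord-length assertions then follow, as sketched above, from Young's law, the constancy of curvature along each arc, and the local asymptotic expansion around each tangency point.
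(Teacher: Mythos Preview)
Your treatment of connectedness, the triple-point angles, and the curvatures is essentially the paper's (competitor arguments plus Young's law and the pressure balance), and your exclusion of all-interior triple points via the impossibility of three disjoint disks sharing a common tangency point is a pleasant geometric alternative to the paper's variational argument in Lemma~\ref{lemma:triplepoints}.

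The genuine gap is in the chord length. Your local computation equates the angle at $T_+$ between the two exterior arcs with $2\theta_\eps$ and reads off $y_+$. But the tangent directions you use, $(-y_+/r_i,1)$ and $(y_+/r_j,1)$, are those of $\partial B_i$ and $\partial B_j$, not of the actual $(i,0)$ and $(j,0)$ arcs of $\overline{\E_\eps}$. What you have a priori is only that these arcs have curvature $1/r_i+o(1)$ and lie in the annulus $(1\pm o(1))B_i$; this pins down the tangent direction at $T_+$ only up to an $o(1)$ error, while the quantity you are trying to resolve, $y_+(1/r_i+1/r_j)$, is itself of order $\sqrt{\eps}$. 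So the $o(1)$ error swamps the signal and the equation $\cos(2\theta_\eps)=1-\tfrac{y_+^2}{2}(1/r_i+1/r_j)^2+\cdots$ does not determine $y_+$ to leading order. A related omission: you argue there is \emph{at most} one arc between a tangent pair, but nothing in your sketch forces $\ell_{ij}^\eps>0$; Young's law is a pointwise condition and is vacuously satisfied if no interior arc is present.

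The paper closes both issues not by a local computation but by the global energy expansion. Proposition~\ref{prop:keyineq} bounds $P_\eps(\overline{\E_\eps})$ from below by $\sum P(B_i)$ plus, for each tangent pair, a term of the form $\tfrac{1}{24}\ell_{ij}^3(\kappa_{E_i}+\kappa_{E_j})^2/2-\eps\ell_{ij}+O(\eps^{5/2})$, whose minimum over $\ell_{ij}\ge 0$ is attained precisely at $\ell_{ij}=\tfrac{4r_ir_j}{r_i+r_j}\sqrt{\eps}+O(\eps^{3/2})$. The recovery sequence (Theorem~\ref{thm:recovery}) shows this lower bound is sharp; hence any minimizer must realise the optimal $\ell_{ij}$ up to $o(\sqrt{\eps})$, and in particular must have one (not zero) arc between each tangent pair, since otherwise $P_\eps^{(1)}(\overline{\E_\eps})$ would strictly exceed $-\T(\B)$. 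To make your local approach work you would first need to upgrade the localization and curvature convergence to rate $o(\sqrt{\eps})$, and separately rule out the no-arc alternative; both seem to require the energy comparison anyway.
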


\begin{remark} \emph{$\Gamma$-convergence.} We decided to state Theorem \ref{thm:sticky} talking about minimizers, but actually a stronger result holds: the rescaled functionals $P^{(1)}_\eps$ given by \eqref{eq:P1eps} $\Gamma$-converge to the tangency functional $\T$ given by \eqref{eq:tangency}, with respect to the $L^1$-convergence of clusters (we refer to \cite{Bra02} for the definition and the properties of $\Gamma$-convergence). The hard part is the $\liminf$ inequality: to prove it, given any family $\E_\eps$ converging to a cluster of disks $\B$, we can build an improved family with a higher regularity using for instance the density of polygonal clusters among all clusters \cite{BCG17}, and then apply Theorem \ref{thm:isop}. The method of looking at the second order behaviour of $P_\eps$ is close in spirit to \cite{AnzBal93}.
\end{remark}

To conclude, we briefly outline the structure of this article. In Section \ref{sec:prelim} we introduce the notation, recall basic facts about minimal clusters and prove preliminary results. In Section \ref{sec:firstorder} we show the first-order result of Proposition \ref{prop:round}. In Section \ref{sec:secondorder} we prove Theorem \ref{thm:sticky} and Theorem \ref{thm:structure}. In particular in Subsections \ref{subsec:localization} and \ref{subsec:contacts} we prove that for $\eps$ small enough each chamber of a minimizer is connected (Lemma \ref{lemma:localizationsimple}) and that there is at most one boundary arc between two different chambers (Lemma \ref{lemma:onearc}). In Subsection \ref{sec:isop} we prove an asymptotic version of quantitative isoperimetric inequality, where the isoperimetric deficit controls the “curvature deficit” of the boundary. From this result we deduce the key lower bound for the perimeter of a given cluster converging to a cluster of disks (Proposition \ref{prop:keyineq}). Finally, in Subsection \ref{subsec:recovery} we build a \emph{recovery sequence} for Theorem \ref{thm:sticky}, that is we prove that the previous lower bound is sharp, and then prove the theorems (Subsection \ref{subsec:proofs}). We conclude with some remarks (Section \ref{sec:remarks}).

\section{Notation and preliminary results}\label{sec:prelim}

\paragraph{\thesection.1 Definitions}
We use the notation $f(\eps)=O(g(\eps))$ and $f(\eps)=o(g(\eps))$ to mean respectively 
\[
\limsup_{\eps\to 0^+}\tfrac{|f(\eps)|}{g(\eps)}<\infty\quad \text{  and }\quad \lim_{\eps\to 0^+}\tfrac{f(\eps)}{g(\eps)}=0.
\]
We denote the area (Lebesgue measure) of a set $E\subset \R^2$ by $|E|$. A planar \emph{$N$-cluster}, or simply \emph{cluster} if the dependence on $N$ is clear, is a family $\E=\{\E(1),\ldots,\E(N)\}$ of disjoint nonempty open sets with finite area and piecewise smooth boundary. The sets $\E(i)$ are called  \emph{chambers} of the cluster (or also \emph{bubbles}, whence the name $N$-bubble), and are not required to be connected. It is useful to define also the \emph{exterior chamber} $\E(0)=\R^2\setminus \bigcup_{i=1}^N \E(i)$. The \emph{interface} between the chambers $\E(i)$ and $\E(j)$ is
\begin{equation}\label{eq:boundary}
\E(i,j):=\partial \E(i)\cap \partial \E(j) .
\end{equation}
The \emph{weighted perimeter} of a cluster is given by the weighted sum of the length of its interfaces as in \eqref{eq:P}.

It is useful to introduce a notion of convergence for $N$-clusters, namely $\E_k\to\E$ iff $|\E_k(i)\Delta \E(i)|\to 0$ for every $i=1,\ldots,N$, where $\Delta$ is the symmetric difference of sets (equivalently, the characteristic functions of each chamber converge in $L^1$). With respect to this convergence, the perimeter given by \eqref{eq:P} is lower semicontinuous if and only if the following triangle inequalities are satisfied: 
\begin{equation}\label{eq:wetting}
c_{ij}\leq c_{ik}+c_{kj} \quad\text{ for every choice of distinct $i,j,k$ }.
\end{equation}
For a reference see \cite{AB90II}, in particular Example 2.8 with $\psi\equiv 1$.

\paragraph{\thesection.2 Existence and regularity of minimal clusters}
We here recall the basic existence and regularity results for minimizing clusters in the plane, which can be found for instance in \cite{Mor98}. The existence of minimal $N$-clusters for a given area constraint follows by the direct method, and requires first to enlarge the class of competitors to include clusters made of finite perimeter sets and prove existence inside this class, and then to recover regularity of minimizers (we refer to \cite{Mag12} for an introduction on finite perimeter sets and clusters). We briefly recall here the basic definitions in this more general setting (for simplicity in the planar case), although we will be dealing with minimizers and thus only with sets of piecewise smooth boundary (in fact, piecewise of constant curvature).

A measurable set $E$ in $\R^2$ is said to be of finite perimeter if
\[P(E):=\sup \left\{\int_{E}\div \phi: \phi\in C^\infty_c(\R^2,\R^2), |\phi|\leq 1 \right\}<\infty.
\]
When the set $E$ is sufficiently regular, $P(E)=\mathrm{length}(\partial E)$. For a finite perimeter set $E$ it is useful to introduce the notion of \emph{essential boundary} $\partial^*E$, which is the set of points in the plane with Lebesgue density neither $0$ nor $1$. The essential boundary coincides with the topological boundary for regular sets. By the structure theorem of finite perimeter sets (\cite[Theorem 15.9]{Mag12}), $P(E)=\H^1(\partial^* E)$, where $\H^1$ is the $1$-dimensional Hausdorff measure. The notion of cluster in this setting can be given almost exactly as in the regular case: a planar $N$-cluster is a family $\E=\{\E(1),\ldots,\E(N)\}$ of finite perimeter sets such that 
\[0<|\E(i)|<\infty \text{ for $1\leq i\leq N$ };\qquad |\E(i)\cap\E(j)|=0 \text{ for $0\leq i< j\leq N$}.
\]
The perimeter of a cluster is obtained by replacing in \eqref{eq:boundary} and \eqref{eq:P} the topological boundary $\partial \E(i)$ with the essential boundary $\partial^* \E(i)$ and the length with the Hausdorff measure $\H^1$.

We note here for future reference that the functional $P_\eps$ can be rewritten in the following equivalent way:
\begin{equation}\label{eq:Pepsrewriting}
P_\eps(\E)= \left(1-\frac{\eps}{2}\right)\sum_{i=1}^N P(\E(i))+\frac{\eps}{2}P(\E(0)).
\end{equation}

By a standard compactness theorem for finite perimeter sets and the lower semicontinuity of the functional $P_\eps$ (which can be proved either checking that the triangle inequalities \eqref{eq:wetting} hold, or using \eqref{eq:Pepsrewriting} and the lower semicontinuity of the perimeter on each chamber) we can prove existence of a minimizer for $P_\eps$ (see also \cite[Section~3.3]{Mor98}):

\begin{theorem}[Existence]
For every $\eps\in [0,2]$ there is a minimizer for $P_\eps$ with any given volume constraint.
\end{theorem}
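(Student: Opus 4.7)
The plan is to apply the direct method of the calculus of variations, following the standard template for existence of minimizing clusters (cf.\ \cite{Mag12}, \cite{Mor98}). I would take a minimizing sequence $(\E_k)_k$ in the class of finite-perimeter clusters satisfying $|\E_k(i)|=m_i$ for $i=1,\dots,N$. An explicit competitor, for instance $N$ disjoint disks of the prescribed areas placed far apart, yields a uniform upper bound $P_\eps(\E_k)\le C$. Combined with the reformulation \eqref{eq:Pepsrewriting}, this bounds the individual perimeters $P(\E_k(i))$ whenever $\eps\in[0,2)$. The borderline case $\eps=2$ degenerates: $P_2(\E)=P(\E(0))$, and by the classical isoperimetric inequality the infimum is attained by taking the union of the chambers to be a disk of area $m_1+\dots+m_N$ partitioned arbitrarily, so existence is trivial. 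From now on assume $\eps\in[0,2)$.

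Next, I would invoke the standard compactness theorem for sets of finite perimeter to extract a subsequence with $\E_k(i)\to \E(i)$ in $L^1_{\mathrm{loc}}(\R^2)$ for each $i$. The delicate point is to rule out loss of mass at infinity, which would violate the area constraint $|\E(i)|=m_i$. I would handle this with the usual concentration argument of Almgren: decompose each chamber into its indecomposable components ordered by decreasing area, translate the largest component of $\E_k(1)$ to the origin, and iterate across chambers. A positive amount of mass escaping to infinity can be cut off and translated back to bounded distance; since $P_\eps$ is additive over connected pieces that lie at mutually large distance while the inter-chamber interfaces cannot grow in the process, one obtains a strictly better competitor, contradicting the minimizing nature of $(\E_k)_k$. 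After this rearrangement the limit $\E$ has $|\E(i)|=m_i$ for every $i$.

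Finally, lower semicontinuity of $P_\eps$ with respect to $L^1$-convergence of clusters follows from checking the triangle inequalities \eqref{eq:wetting} for the coefficients $c_{ij}(\eps)$. When $i,j\neq 0$, taking $k=0$ gives $2-\eps\le 2$ and taking $k\neq 0$ gives $2-\eps\le 2(2-\eps)$, both valid for $\eps\in[0,2]$; when exactly one of $i,j$ equals $0$ the inequality reads $1\le(2-\eps)+1$, which is trivial. Therefore $P_\eps(\E)\le\liminf_k P_\eps(\E_k)$, and combined with the no-mass-loss step this shows that $\E$ attains the infimum.

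The main obstacle is unambiguously the second step: eliminating escape of mass at infinity in an unbounded ambient space with only an area constraint. The first and third steps are routine applications of standard bounds, Rellich-type compactness, and the triangle inequality criterion for lower semicontinuity.
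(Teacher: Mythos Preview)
Your proposal is correct and follows essentially the same route as the paper: the direct method, with compactness for finite-perimeter sets and lower semicontinuity of $P_\eps$ via the triangle inequalities \eqref{eq:wetting} (the paper also notes the alternative of using \eqref{eq:Pepsrewriting} directly). You supply more detail than the paper's one-line sketch—in particular the Almgren-type no-mass-loss step and the separate treatment of the degenerate endpoint $\eps=2$—but the underlying argument is the standard one the paper defers to \cite[Section~3.3]{Mor98}.
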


Regarding regularity of minimizers, we have the following theorem:
\begin{theorem}[\cite{Mor98}, Proposition 4.3]\label{thm:regularity}
Any minimizer $\overline{\E_\eps}$ of $P_\eps$ has the following properties:
\begin{itemize}[label=\raisebox{0.25ex}{\tiny$\bullet$}]
\item each chamber has a piecewise $C^1$ boundary made of a finite number of arcs with constant curvature;
\item these arcs meet in a finite number of vertices, where they satisfy the condition
\begin{equation}\label{eq:tripointcondition}
\sum_{} c_{ij}\tau_{ij}=0 
\end{equation}
where $\tau_{ij}$ is the unit vector starting from the vertex and tangent to $\partial \E(i)\cap \partial \E(j)$, and the sum is extended over all interfaces meeting at the vertex; 
\item  around any vertex the weighted curvatures sum to zero.
\end{itemize}
\end{theorem}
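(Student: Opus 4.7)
The plan is the classical two-step strategy for constrained variational problems: first upgrade the area-constrained minimality to an \emph{unconstrained} local almost-minimality for the weighted perimeter $P_\eps$, and then feed this into the standard regularity theory for almost-minimizing planar clusters to extract the structural statements. The extra bullets on curvatures and vertex angles are then read off from explicit first variations in the present weighted setting.

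\textbf{Step 1: almost-minimality.} I would first show that $\overline{\E_\eps}$ is a $(\Lambda,r_0)$-almost-minimizer for $P_\eps$ without the area constraint. The tool is the usual volume-exchange construction: since each chamber has positive area, one can fix vector fields $X_1,\ldots ,X_N$ compactly supported in the interiors of regular points of the chambers, far from a chosen ball $B_{r_0}(x_0)$, with $\int_{\E(i)}\!\div X_j=\delta_{ij}$. Given any competitor $\F$ agreeing with $\overline{\E_\eps}$ outside $B_r(x_0)$ for $r<r_0$, the volume defect in each chamber is $O(r^2)$ and can be corrected by flowing along a small linear combination of the $X_i$'s, at a perimeter cost that is linear in the defect. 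Comparing with minimality of $\overline{\E_\eps}$ gives $P_\eps(\overline{\E_\eps})\le P_\eps(\F)+\Lambda r^2$ for some constant $\Lambda$ depending only on the $X_i$'s and the cluster, i.e.\ the desired almost-minimality.

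\textbf{Step 2: structural regularity.} I would then invoke the regularity theory for planar almost-minimizing clusters satisfying the triangle inequalities \eqref{eq:wetting}; in the present case these are strict for every $\eps\in(0,2)$. The theory (Almgren, Taylor, Morgan; see \cite{Mor98} and the overview in \cite{Mag12}) gives that each interface $\E(i,j)$ is $C^{1,\alpha}$ away from a closed singular set, and that in dimension two the only admissible tangent cones at singular points are finite unions of half-lines meeting at a single point under the angular balance dictated by the weights. A density/monotonicity argument, together with the fact that such angular configurations form a discrete set, rules out accumulation of singularities and shows that each chamber's boundary consists of finitely many regular arcs meeting at finitely many isolated vertices.

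\textbf{Step 3: first variations.} On a single arc $\E(i,j)$, a compactly supported normal perturbation changes only the areas of chambers $i$ and $j$. Introducing Lagrange multipliers $\lambda_i$ for the constraints $|\E(i)|=m_i$, the first variation of $P_\eps$ gives the Euler--Lagrange equation
\[
c_{ij}(\eps)\,\kappa_{ij}=\lambda_j-\lambda_i,
\]
so $\kappa_{ij}$ is constant along the arc and each arc is a circular arc. At a vertex, applying an inner variation that translates the junction by a vector $v$ produces $\sum c_{ij}(\eps)\tau_{ij}\cdot v=0$ for every $v$, which is exactly \eqref{eq:tripointcondition}. Finally, listing the chambers $i_1,i_2,\ldots,i_k,i_1$ cyclically around a vertex and summing the curvature identity along the arcs of the corresponding interfaces, the multipliers telescope and one obtains $\sum c_{i_l i_{l+1}}(\eps)\,\kappa_{i_l i_{l+1}}=0$, namely the third bullet.

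The main obstacle is Step 2: the fact that vertices are isolated and that only finitely many arcs occur is the genuinely nontrivial ingredient and is precisely where one relies on the planar cluster regularity theory of \cite{Mor98}. Once this structural backbone is in place, Steps 1 and 3 are standard and are checked by direct computation in the weighted setting determined by $c_{ij}(\eps)$.
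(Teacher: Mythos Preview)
The paper does not prove this theorem at all: it is stated as a citation of \cite[Proposition~4.3]{Mor98} and used as a black box throughout. There is therefore no ``paper's own proof'' to compare your proposal against.

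That said, your three-step outline is the standard route and is essentially what underlies Morgan's argument: reduce the constrained problem to unconstrained almost-minimality via volume-fixing variations, apply the planar regularity theory for weighted clusters to get finitely many $C^{1,\alpha}$ arcs and isolated vertices, and then read off the Euler--Lagrange conditions by first variation. Your Step~3 is clean and correct; the telescoping of Lagrange multipliers is exactly how the weighted-curvature balance at a vertex arises. As you yourself note, Step~2 is where the actual content lies, and your sketch there is really just a restatement of the conclusion of \cite{Mor98} rather than an independent argument---so the proposal is more a guide to the literature than a self-contained proof, which is entirely appropriate for a result the paper only quotes.
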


In the case where all weights are equal, something more can be said: namely that at each vertex exactly three arcs meet forming 120-degree angles. In the general case of minimal weighted clusters there could be also quadruple points (for instance consider four equal squares with a vertex in common, with weights $>\sqrt 2$ between diagonally-opposite squares and $1$ otherwise; this cluster is minimizing among clusters with the same boundary condition. Compare also with the example at the end of \cite[Section 2.3]{AB90II}). However, for our specific choice of weights given by \eqref{eq:Peps}, we are able to recover the triple-point property: exactly three arcs meet at each vertex, as the next lemma shows. This property should in principle be inferable from the algebraic conditions that weights have to satisfy at each vertex given in \cite[Remark 4.4]{Mor98}, however we prefer the following more direct and geometric argument.

\begin{lemma}[Triple-point property]\label{lemma:triplepoints}
For $\eps$ small enough, at every vertex of a minimizer of $P_\eps$ exactly three arcs meet. Moreover at every such vertex exactly one of the chambers is the exterior one $\E(0)$ and the angles are given by $\pi-\theta_\eps,\pi-\theta_\eps,2\theta_\eps$, where
\[\theta_\eps=\arccos\left(1-\frac{\eps}{2}\right).\]
\end{lemma}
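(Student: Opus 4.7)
The plan is to first derive the angles directly from the balance condition, and then rule out all vertex types except the admissible one (``three arcs meeting, exactly one adjacent chamber equal to $\E(0)$'') by local surgeries that strictly decrease the weighted perimeter.

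\emph{Angles.} At a vertex where three arcs meet with adjacent chambers $i, j, 0$ ($i, j \geq 1$), the balance condition \eqref{eq:tripointcondition} reads $(2-\eps)\tau_{ij} + \tau_{j0} + \tau_{0i} = 0$. Isolating $\tau_{ij}$ and squaring yields
\[
\tau_{j0}\cdot\tau_{0i} = \tfrac{(2-\eps)^2 - 2}{2} = 1 - 2\eps + \tfrac{\eps^2}{2} = \cos 2\theta_\eps,
\]
the last equality being the double-angle formula applied to $\cos\theta_\eps = 1 - \eps/2$. Hence the interior angle at $\E(0)$ is $2\theta_\eps$; isolating $\tau_{j0}$ instead gives $\tau_{ij}\cdot\tau_{0i} = -\cos\theta_\eps$, so the angles at $\E(i)$ and $\E(j)$ are both $\pi - \theta_\eps$.

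\emph{Ruling out all-interior triple points.} Suppose three interior chambers $i, j, k$ meet at $p$; by balance all three arcs have weight $2-\eps$ and meet at $120^\circ$. For $\sigma$ small, let $E$ be the equilateral triangle with vertices on the three arcs at distance $\sigma$ from $p$, and reassign $E$ to the exterior by setting $\E'(l) := \E(l)\setminus E$ for $l = i, j, k$ and $\E'(0) := \E(0)\cup E$. Inside $E$ the original interior arcs disappear (both sides are now exterior), and three new arcs of types $i|0, j|0, k|0$ appear on $\partial E$, each of length $\sigma\sqrt 3$ and weight $1$, while each original arc is shortened by $\sigma$. Hence
\[
\Delta P_\eps = -3\sigma(2-\eps) + 3\sqrt 3\,\sigma + O(\sigma^2) = 3\sigma(\sqrt 3 - 2 + \eps) + O(\sigma^2),
\]
where the $O(\sigma^2)$ term absorbs the cost of a small dilation of each $\E(l)$ restoring $|\E(l)| = m_l$ (since $|E| = O(\sigma^2)$). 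For $\eps < 2 - \sqrt 3$ and $\sigma$ small, this is strictly negative, contradicting minimality. Since the three chambers at a genuine triple point must be distinct (else one arc would have the same chamber on both sides), exactly one of them is $\E(0)$.

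\emph{Ruling out $k \geq 4$ vertices.} An analogous surgery — replacing $E$ by the polygon with vertices on the $k$ arcs at distance $\sigma$ from $p$, reassigned to the exterior — yields leading-order change
\[
\Delta P_\eps = -\sigma\sum_{l=1}^k c_l + 2\sigma\sum_{l:\,C_l\neq 0}\sin(\phi_l/2) + O(\sigma^2),
\]
where $\phi_l$ is the angle of chamber $C_l$ at $p$, $c_l$ is the weight of the $l$-th arc, and sides of $E$ lying within an exterior wedge contribute nothing (they merge with the existing exterior). In the all-interior case the balance forces $\phi_l = 2\pi/k$, and the factor $2\sin(\pi/k) - (2-\eps)$ is negative for any $k \geq 3$ and $\eps$ small (since $2\sin(\pi/k) \leq \sqrt 3$ for $k \geq 3$); in mixed cases the weights $c_l \in \{1, 2-\eps\}$ and the balance-determined angles conspire similarly. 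The main obstacle is precisely this last step: the casework over the cyclic patterns of interior/exterior chambers around $p$ and the corresponding balance equations must be handled carefully to confirm a strict leading-order decrease in every configuration.
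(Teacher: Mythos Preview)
Your angle computation and your surgery for the all-interior triple point are correct and essentially coincide with the paper's argument (the paper phrases the latter as ``the perimeter of an equilateral triangle is smaller than $(2-\eps)$ times its Steiner length'', which is exactly your inequality $3\sqrt 3\,\sigma < 3(2-\eps)\sigma$).

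The gap is in the $k\ge 4$ case. First, a minor error: your claim that in the all-interior case ``the balance forces $\phi_l = 2\pi/k$'' is false for $k\ge 4$; with equal weights the condition $\sum_l \tau_l = 0$ allows many angle patterns (e.g.\ $60^\circ,120^\circ,60^\circ,120^\circ$ for $k=4$). This is harmless, since Jensen gives $\sum_l 2\sin(\phi_l/2)\le 2k\sin(\pi/k)\le k\sqrt3 < k(2-\eps)$ anyway. The real problem is the mixed case: you write down a formula and then say the casework ``must be handled carefully'', which is an admission that the proof is not finished. And the casework is genuinely delicate for your polygon surgery: with exterior wedges present the removed weights drop to $1$ while the added sides can have $\sin(\phi_l/2)$ close to $1$, so the leading-order sign is not obvious without invoking the balance equations configuration by configuration.

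The paper sidesteps this entirely by a different case split. Rather than one universal polygon surgery, it first disposes of vertices with at least one exterior wedge: if there is exactly one exterior wedge, the $k-1\ge 3$ interior wedges share angle $<2\pi$, so one of them is $<120^\circ$, and a local Steiner move in that wedge strictly shortens; if there are at least two exterior wedges, the interior chambers between two of them lie in a half-plane and one can excise a small triangle outright. Only after reducing to the all-interior case does the paper use (essentially your) argument that some angle is $<120^\circ$ and a Steiner move wins. This decomposition avoids the cyclic-pattern casework your approach would require.
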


\begin{proof}
We suppose that there is a vertex at which at least four arcs meet, and prove that the cluster is not minimal since we can modify it to lower the energy. We give the proof under the simplifying assumption  that the arcs meeting at the vertex are straight edges; the proof in the general case is almost identical, it suffices to zoom at a sufficiently small scale and apply the same argument. 

First we show that there can not be any component of the exterior chamber around such a point: 
\begin{itemize}[label=\raisebox{0.25ex}{\tiny$\bullet$}]
\item if there is only one component of the exterior chamber then, since at least one of the remaining angles is less than $120$ degrees, we could put a Steiner configuration inside a small triangle of small lengthscale $\delta$, fixing the area somewhere else (see Figure \ref{fig:vertex1});
\item if instead there are at least two components of the exterior chamber, then one of the remaining portions is contained in a half-plane. We can modify all the chambers in this half-plane removing completely a small triangle of small lengthscale $\delta$, and fix the area somewhere else (see Figure \ref{fig:vertex2}). 
\end{itemize}
In both cases, when $\delta$ is small enough, we reduce the perimeter since the reduction in perimeter due to the first modification is of order $\approx \delta$, while the change in perimeter due to the area-fixing variations is of order $\approx \delta^2$.

\begin{figure}
    \centering
    \begin{minipage}{0.45\textwidth}
        \centering
        \includegraphics[width=0.9\textwidth]{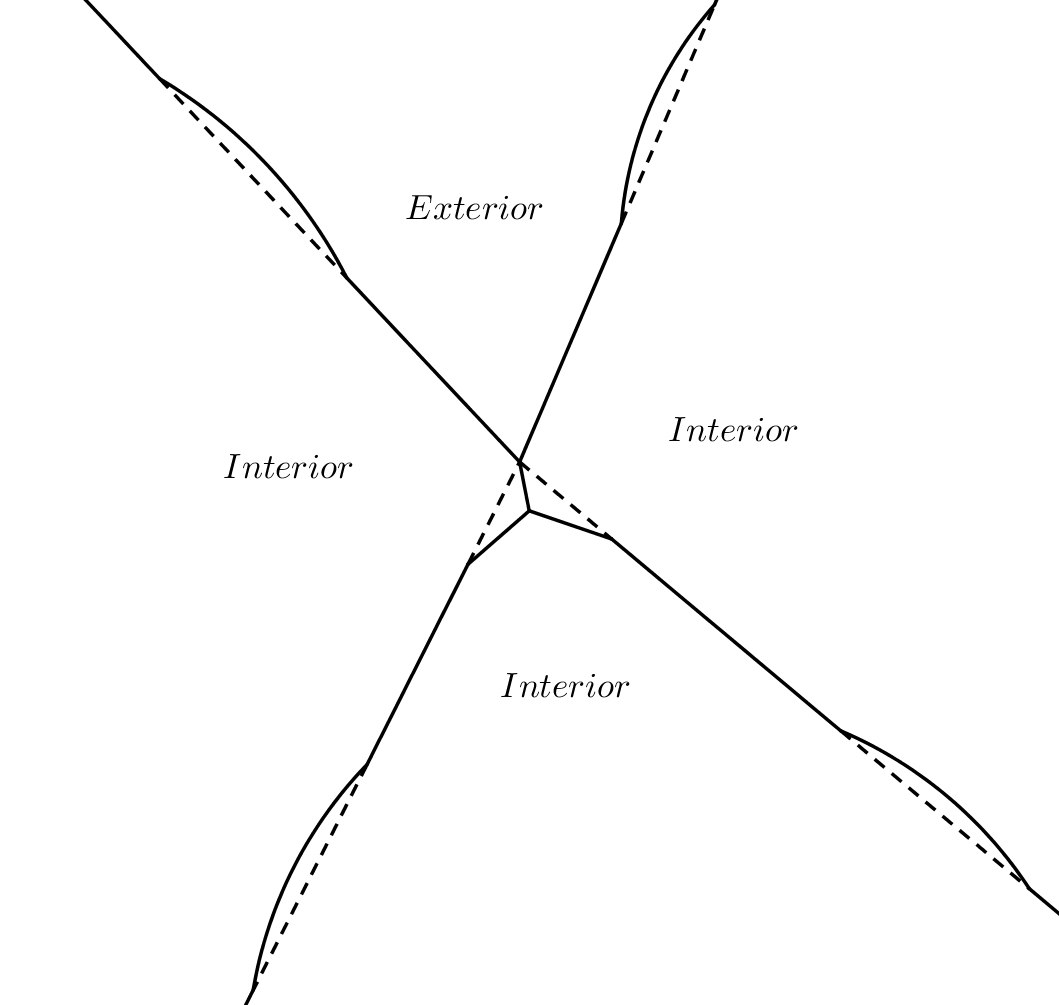}
        \caption{\small{If there is only one exterior component, we can put a small Steiner configuration in one of the remaining angles which is less than $120$ degrees. We have to fix the area with a slight inflation or deflation.}}\label{fig:vertex1}
    \end{minipage}\hfill
    \begin{minipage}{0.45\textwidth}
        \centering
        \includegraphics[width=0.96\textwidth]{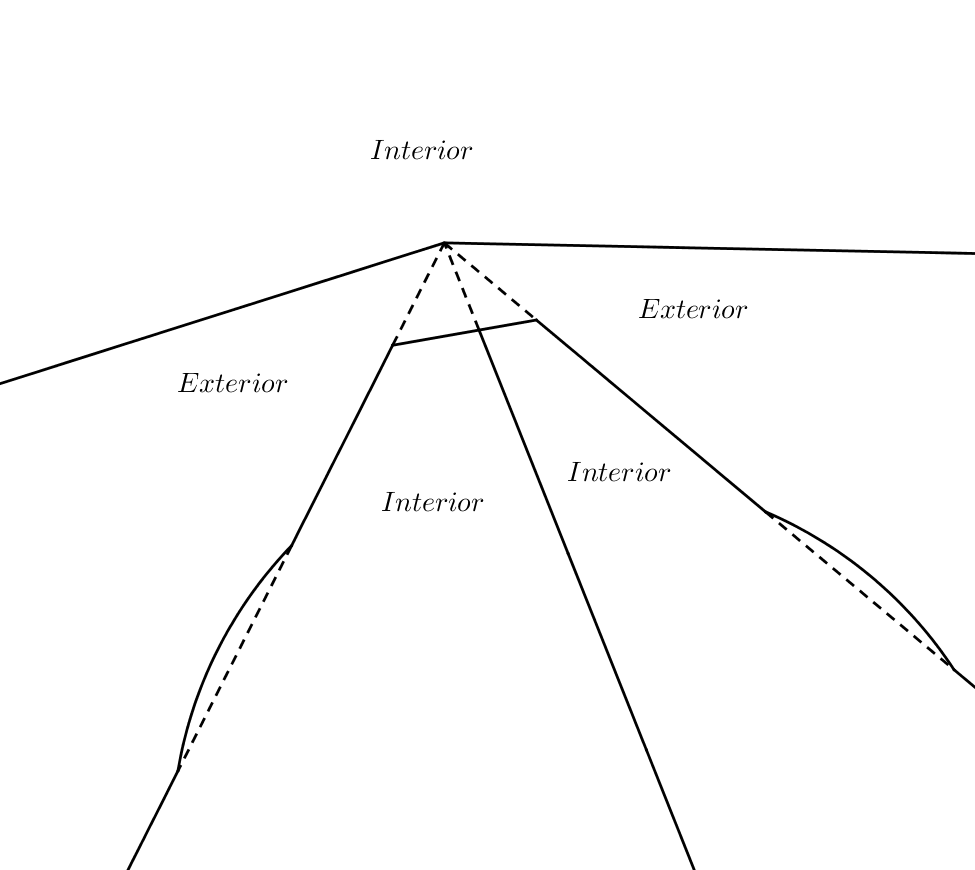}
        \caption{\small{If there are at least two exterior components, one of the remaining portions is contained in a half-plane. We can cut and remove a whole triangle, again fixing the area with a slight inflation or deflation.}}\label{fig:vertex2}
    \end{minipage}
\end{figure}

We are therefore left with a configuration in which there is no exterior chamber. But then, since we are supposing to have at least four components, at least one of the angles is less than $120$ degrees, and we can lower the energy again by putting a small Steiner configuration inside a small triangle. This proves that there must be exactly three arcs meeting at each vertex. The same proof, as already said, holds even if the arcs are curved, looking at a sufficiently small scale around the vertex and applying similar variations.

Now let us prove that around any vertex exactly two interior and one exterior components meet. If the three chambers meeting at a vertex were all interior chambers, the standard variational argument would imply that the angles are $120$ degrees; but then we could insert a small triangular hole (a component of the exterior chamber) around the vertex, again adjusting the area somewhere else. The reduction of perimeter is again of order $\delta$, plus corrections of order $\delta^2$ for the area adjustments. The key point is that the perimeter of an equilateral triangle is smaller than the length of its Steiner configuration multiplied by $2-\eps$, for $\eps$ sufficiently small. Therefore we conclude that the only components we can have are two interior chambers and one exterior chamber.

Finally, the computation of the angle $\theta_\eps$ comes directly from condition \eqref{eq:tripointcondition}.
\end{proof}

\paragraph{\thesection.3 Isoperimetric inequality}
We end this section by stating the isoperimetric inequality in the following form:

\begin{proposition}[Isoperimetric inequality with signed areas]\label{prop:isop}
A circumference enclosing area $m>0$ minimizes $\mathrm{length}(\gamma)$ among all oriented planar rectifiable curves $\gamma$ enclosing a signed area $m$.
\end{proposition}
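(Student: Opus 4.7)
The plan is to reduce the signed-area inequality to the classical isoperimetric inequality by decomposing $\gamma$ according to the winding number. Assume $m>0$, and let $w_\gamma : \R^2 \setminus \mathrm{Im}(\gamma) \to \mathbb{Z}$ be the winding number of $\gamma$. Set $F_k = \{w_\gamma \geq k\}$ and $G_k = \{w_\gamma \leq -k\}$ for $k\geq 1$; a layer-cake decomposition gives
\[
m \;=\; \int_{\R^2} w_\gamma(z)\,\d z \;=\; \sum_{k\geq 1}|F_k| \;-\; \sum_{k\geq 1}|G_k|,
\]
and each $F_k,G_k$ is a set of finite perimeter (with only finitely many being nonempty as soon as $\gamma$ is rectifiable).

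The key geometric step is that the winding number jumps by exactly $\pm 1$ across each passage of $\gamma$, so every arc of $\gamma$ lies on the essential boundary of exactly one $F_k$ or one $G_k$. Applying the coarea formula to the $\mathbb{Z}$-valued $\mathrm{BV}$ functions $w_\gamma^\pm$ would then yield
\[
\mathrm{length}(\gamma) \;\geq\; \sum_{k\geq 1}P(F_k) \;+\; \sum_{k\geq 1}P(G_k).
\]

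Combining this with the classical isoperimetric inequality $P(E)^2\geq 4\pi |E|$ on each level set, together with the elementary bound $\big(\sum_k a_k\big)^2\geq \sum_k a_k^2$ valid for $a_k\geq 0$, yields
\[
\mathrm{length}(\gamma)^2 \;\geq\; \sum_k P(F_k)^2 + \sum_k P(G_k)^2 \;\geq\; 4\pi\Big(\sum_k|F_k|+\sum_k|G_k|\Big) \;\geq\; 4\pi m,
\]
which matches the squared length $4\pi m$ of the circle enclosing area $m$. Equality forces all level sets but one to be empty and the remaining set to be a round disk, so the minimizers are exactly circles traversed once.

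The main obstacle is the inequality $\mathrm{length}(\gamma)\geq \sum_k P(F_k)+\sum_k P(G_k)$, since self-intersections contribute multiplicity to the length of $\gamma$ but a single point of the plane may lie on only one level set boundary. I would settle this by a direct $\mathrm{BV}$/coarea argument on $w_\gamma^\pm$, bounding their total variations by $\mathrm{length}(\gamma)$ and then expanding each total variation as the integral of super-level perimeters.
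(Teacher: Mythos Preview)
The paper does not actually prove this proposition: it is stated at the end of Section~\ref{sec:prelim} as a known fact (``We end this section by stating the isoperimetric inequality in the following form'') and used as a black box in the proof of Theorem~\ref{thm:isop}. So there is no original argument to compare with; you are supplying a proof where the author simply quotes the result.

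Your approach via the winding-number decomposition is sound and is a standard way to derive the signed-area version from the classical set-theoretic isoperimetric inequality. The layer-cake identity for $m$ is correct, and the chain
\[
\mathrm{length}(\gamma)\ \ge\ |Dw_\gamma|(\R^2)\ =\ \sum_{k\ge 1}P(F_k)+\sum_{k\ge 1}P(G_k)
\]
followed by $P(E)^2\ge 4\pi|E|$ on each level and the trivial bound $\sum|F_k|+\sum|G_k|\ge\sum|F_k|-\sum|G_k|=m$ gives exactly $\mathrm{length}(\gamma)^2\ge 4\pi m$. The equality discussion is also right.

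Two small points to tighten. First, the claim that only finitely many $F_k,G_k$ are nonempty is not needed and is not obviously true for an arbitrary rectifiable closed curve; what you actually use is that the series $\sum P(F_k)+\sum P(G_k)$ converges, and this follows directly from $|Dw_\gamma|(\R^2)<\infty$ via the coarea formula for integer-valued $BV$ functions. Second, the inequality $\mathrm{length}(\gamma)\ge |Dw_\gamma|(\R^2)$ deserves one clean sentence rather than the slightly hedged paragraph at the end: it is immediate once you note that $Dw_\gamma$ is the integration current carried by $\gamma$ (with possible cancellations where $\gamma$ overlaps itself with opposite orientations), so its mass is at most the $\H^1$-length of $\gamma$ counted with multiplicity. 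Your intuition that ``each arc lies on the boundary of exactly one level set'' is the right picture for embedded pieces, but the $BV$/current formulation handles self-intersections and overlaps without extra casework.
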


\section{First order analysis: convergence to disks}\label{sec:firstorder}
In this section we prove the first-order result that the limit clusters are made of disks. We begin with a simple compactness result:
\begin{lemma}[Compactness]\label{lemma:compactness}
Any sequence of minimizers $\overline{\E_\eps}$ has uniformly bounded diameter, that is
\[\diam(\overline{\E_{\eps}})\leq C<+\infty.
\]
\end{lemma}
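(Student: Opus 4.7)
The plan is to combine an easy upper bound on $P_\eps(\overline{\E_\eps})$, coming from a trivial competitor, with the rewriting \eqref{eq:Pepsrewriting} and the classical isoperimetric inequality (Proposition \ref{prop:isop}), so as to control the outer perimeter $P(\overline{\E_\eps}(0))$; then to convert this into a diameter bound via a projection/slicing argument.

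For the upper bound I would use, as a competitor, $N$ pairwise disjoint disks $B_i$ of area $m_i$ placed far apart: no interior interfaces are present, so
\[
P_\eps(\overline{\E_\eps})\leq P_\eps(B_1,\ldots,B_N)=\sum_{i=1}^N 2\sqrt{\pi m_i}=:C_1.
\]
Combining this with \eqref{eq:Pepsrewriting} and the isoperimetric inequality $P(\overline{\E_\eps}(i))\geq 2\sqrt{\pi m_i}$ (valid even if the chamber is disconnected, since $\sqrt{a}+\sqrt{m-a}\geq\sqrt{m}$) yields
\[
\bigl(1-\tfrac{\eps}{2}\bigr)C_1+\tfrac{\eps}{2}\,P(\overline{\E_\eps}(0))\leq P_\eps(\overline{\E_\eps})\leq C_1,
\]
hence $P(\overline{\E_\eps}(0))\leq C_1$ uniformly in $\eps\in(0,1]$.

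Now set $U_\eps:=\bigcup_{i=1}^N \overline{\E_\eps}(i)$, so that $P(U_\eps)=P(\overline{\E_\eps}(0))\leq C_1$. Working up to rigid motions applied independently to each connected component of the support, I would argue that one may assume $U_\eps$ is connected: if the support splits into two pieces at positive distance, sliding one toward the other until first contact preserves $P_\eps$, because translations are isometries and a tangency point has zero $\H^1$-measure, so no interface of positive length is created or destroyed. For a connected bounded set $U$ of finite perimeter, a standard projection/slicing argument then gives $P(U)\geq 2\diam(U)$: projecting $U$ onto the line through two diameter-realizing points yields a connected interval of length $\geq\diam(U)$, each nonempty bounded slice of $U$ perpendicular to that line has at least two essential boundary points, and the coarea formula combines these. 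Putting everything together we obtain $\diam(\overline{\E_\eps})\leq C_1/2$, uniform in $\eps$.

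The only non-routine step is the reduction to connected support: one has to check that the translate-until-first-contact procedure produces tangency at zero area of overlap, which follows from continuity of the overlap area as a function of the translation parameter. The rest is a textbook combination of the isoperimetric inequality with $BV$ slicing.
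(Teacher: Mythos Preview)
Your overall strategy coincides with the paper's: bound $P_\eps(\overline{\E_\eps})$ by the trivial competitor, convert this into a bound on the perimeter of the support $U_\eps=\bigcup_i\overline{\E_\eps}(i)$, and then use that perimeter controls diameter for connected planar sets. Two comments, one minor and one a genuine gap.

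\emph{Minor.} The detour through \eqref{eq:Pepsrewriting} and the isoperimetric inequality is unnecessary. Since the interior interfaces carry weight $2-\eps\geq 0$, one has directly
\[
P(U_\eps)=P(\overline{\E_\eps}(0))=\sum_{i\geq 1}\H^1(\overline{\E_\eps}(0,i))\leq P_\eps(\overline{\E_\eps})\leq C_1,
\]
which is how the paper proceeds.

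\emph{The gap.} Your reduction ``working up to rigid motions applied independently to each connected component'' does not prove the lemma as stated. If a minimizer were disconnected, then sliding one component produces another minimizer with the same value of $P_\eps$, so you have shown only that \emph{some} minimizer has bounded diameter. But by the same token, translating a component \emph{away} also preserves $P_\eps$, so disconnected minimizers would exist with arbitrarily large diameter, and the statement ``any sequence of minimizers has uniformly bounded diameter'' would simply be false. What is actually needed is to prove that minimizers are connected. The paper closes this by pushing your sliding argument one step further: the touched-together configuration is still a minimizer, but at the contact point at least four arcs meet, contradicting the triple-point property (Lemma~\ref{lemma:triplepoints}). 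Hence no disconnected minimizer exists, and the diameter bound follows for every minimizer. Your argument becomes correct once you append this contradiction; without it, you have only established the weaker (though for the downstream compactness application still usable) statement that a bounded-diameter minimizer exists for each $\eps$.
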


\begin{proof}
The result follows essentially from the fact that for connected sets in the plane the perimeter controls the diameter, namely $\diam E\leq \frac12 P(E)$. Supposing that  $E_\eps:=\bigcup_{i=1}^N \overline{\E_\eps}(i)$ is connected we indeed obtain
\[2\diam E_\eps\leq P(E_\eps)\leq P_\eps(\overline{\E_\eps})\leq \sum_{i=1}^N P(B_i)
\]
which gives the desired conclusion.

Let us now prove that $E_\eps$ is connected. By the regularity result of Theorem \ref{thm:regularity} we know that for every $\eps$ each chamber of $\overline{\E_\eps}$ is equivalent to an open set which has piecewise $C^1$ boundary. If $E_\eps$ were disconnected, we could take two connected components and move them until they touch without changing the value of $P_\eps$. The cluster thus created would still be minimal but would have at least a quadruple point, contradicting Lemma \ref{lemma:triplepoints}. This concludes the proof.
\end{proof}

We can now prove the first-order result of Proposition \ref{prop:round}.

\begin{proof}[Proof of Proposition \ref{prop:round}]
Using $N$ disjoint disks as competitors we obtain that 
\[P(\overline{\E_{\eps}}) \leq P_{\eps}(\overline{\E_{\eps}})\leq \sum_{i=1}^N \sqrt{4\pi m_i}<+\infty.
\]
Moreover by Lemma \ref{lemma:compactness} the sequence has uniformly bounded diameter, and thus the following uniform bound holds for minimizers $\overline{\E_\eps}$:
\[\sup_\eps P(\overline{\E_\eps})+\diam(\overline{\E_\eps})<\infty.
\] 
By a standard compactness result about finite perimeter sets (see \cite[Theorem 12.26]{Mag12}), this implies that minimizers $\overline{\E_\eps}$ converge, up to subsequence and rigid motions, to a limit cluster $\E_0$ with the same area constraint. 
By \eqref{eq:Pepsrewriting}, we also obtain that
\begin{equation}\label{eq:trivial}
P(\overline{\E_\eps}(i))\leq \sqrt{4\pi m_i}(1+O(\eps)),
\end{equation}
and by lower semicontinuity of perimeter we obtain
\[P(\E_0(i))\leq \sqrt{4\pi m_i}.
\]
By the isoperimetric inequality, the unique minimizer of perimeter for a given area constraint is the disk, and therefore $\E_0(i)$ is a disk of area $m_i$.
\end{proof}

\section{Second order analysis: sticky-disk limit}\label{sec:secondorder}

We now want to obtain some more information about minimizers $\overline{\E_\eps}$ as $\eps\to 0$.
In the last section we saw that, up to translation, minimal clusters converge to a cluster of disks; this was a simple consequence of the isoperimetric inequality together with a compactness result. However, as already pointed out, we don't expect to see in the limit every cluster of disks: for instance Lemma \ref{lemma:compactness} suggests that at least the limit clusters must be connected. To understand what kind of clusters can arise as limits, we will perform a higher order expansion of the perimeter.

\subsection{Localization of contacts between different chambers}\label{subsec:localization}
In this subsection we prove a localization result that basically says that each chamber of a minimizer is sandwiched between two concentric disks $(1-o(1))B$ and $(1+o(1))B$, $B$ being a disk with the same area as the chamber. This can be seen as an improvement from the $L^1$ convergence of Proposition \ref{prop:round} to “uniform” convergence, or Hausdorff convergence of the boundaries. A consequence of this is that any pair of chambers whose limit disks are not touching will eventually share no boundary. Moreover we prove that for $\eps$ small enough each chamber of a minimizer is connected.

\begin{lemma}[Localization Lemma]\label{lemma:localizationsimple}
Suppose that a minimizer $\overline{\E_\eps}$ converges to the cluster of disks $\B=(B_1,\ldots, B_N)$. Then, for $\eps$ small enough, each chamber $\overline{\E_\eps}(i)$ is connected, and moreover
\[(1-o(1)))B_i\subset \overline{\E_\eps}(i)\subset (1+o(1))B_i.
\]
\end{lemma}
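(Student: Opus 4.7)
The plan rests on the single-chamber perimeter bound recorded in \eqref{eq:trivial},
\[
P(\overline{\E_\eps}(i)) \leq \sqrt{4\pi m_i}\bigl(1+O(\eps)\bigr),
\]
which, combined with the area constraint $|\overline{\E_\eps}(i)|=m_i$, says that every chamber has isoperimetric deficit $O(\eps)$. Starting from here I would prove the two parts of the statement separately: first connectedness, by ruling out parasitic components, then the Hausdorff-type sandwich, by upgrading the $L^1$ convergence of Proposition \ref{prop:round}.

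For connectedness, suppose that $\overline{\E_\eps}(i)=A\sqcup A'$ with $|A'|=\alpha\in(0,m_i/2]$. Applying the planar isoperimetric inequality to $A$ and $A'$ separately and using the elementary estimate $\sqrt{m_i-\alpha}+\sqrt{\alpha}\geq\sqrt{m_i}+c\sqrt{\alpha}$ (for $\alpha\leq m_i/2$ and a universal $c>0$), one gets a perimeter excess at least $c\sqrt{\alpha}$. Comparison with \eqref{eq:trivial} then forces $\alpha=O(\eps^2)$. A parasite of such tiny area can be destroyed by a direct modification: delete $A'$, absorbing its area into the surrounding chamber, and restore $|\overline{\E_\eps}(i)|=m_i$ by a localised outward push of $A$ along one of its interfaces. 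Because every weight $c_{ij}(\eps)$ lies in $[1,2]$, the deletion saves at least $c\sqrt{\alpha}$ in $P_\eps$ while the volume-fixing correction costs only $O(\alpha)$; for $\alpha=O(\eps^2)$ and $\eps$ small the net change is strictly negative, contradicting minimality.

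Once connectedness is in hand, a quantitative planar isoperimetric inequality (Bonnesen's inequality, or a Fuglede-type sharp quadratic version) provides, for each $i$, a disk $D_i$ of area $m_i$ with $\mathrm{dist}_H(\overline{\E_\eps}(i),D_i)=O(\sqrt{\eps})$. The $L^1$ convergence $\overline{\E_\eps}(i)\to B_i$ from Proposition \ref{prop:round} then pins the centre of $D_i$ within $o(1)$ of that of $B_i$, which delivers the desired sandwich $(1-o(1))B_i\subset \overline{\E_\eps}(i)\subset (1+o(1))B_i$.

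The main obstacle I anticipate is the area-reallocation step in the connectedness argument: one must actually produce a competitor with reallocated volumes at $P_\eps$-cost $O(\alpha)$ uniformly in $\eps$. This requires that the main component of each chamber has a uniformly long interface available for the outward push, a fact that is itself guaranteed, for $\eps$ small, by the $L^1$ convergence to $B_i$ and the uniform upper bound \eqref{eq:trivial} on its perimeter.
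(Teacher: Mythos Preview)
Your connectedness argument is essentially the paper's: it too removes the small components and inflates the dominant one to reach a contradiction. The paper organizes this as two steps (first showing one component carries mass $1-o(1)$, then running the deletion--inflation), but the idea is the same and your version is correct.

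The sandwich part, however, has a real gap. Bonnesen's inequality (and Fuglede's nearly-spherical estimate) applies to Jordan domains, i.e.\ simply connected sets bounded by a single closed curve; connectedness alone is not enough. A connected chamber $\overline{\E_\eps}(i)$ can have holes occupied by the exterior or by other chambers, and nothing you have written excludes this. If you fill the holes and apply Bonnesen to the filled set $\hat E$, you do get $\hat E$ sandwiched between concentric disks, hence the outer inclusion $\overline{\E_\eps}(i)\subset\hat E\subset(1+o(1))B_i$; but the inner inclusion $(1-o(1))B_i\subset\overline{\E_\eps}(i)$ does not follow, because the holes of $\overline{\E_\eps}(i)$ may sit deep inside $B_i$. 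Note also that small Hausdorff distance $d_H(E,D)$ between the sets does not yield $(1-o(1))D\subset E$: every point of $D$ being \emph{close to} $E$ is strictly weaker than being \emph{in} $E$.

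The paper avoids quantitative isoperimetry altogether. For the outer inclusion it passes to the convex hull (which lowers perimeter for planar connected sets and is perimeter-monotone among convex bodies), and then argues directly that $co(\overline{\E_\eps}(i))$ is pinched between $(1\pm o(1))B_i$. For the inner inclusion it uses minimality once more: once the outer inclusion is known for \emph{every} chamber, the only thing that can occupy a hole deep inside $\overline{\E_\eps}(i)$ is the exterior, and such exterior holes are removed by the same fill-and-deflate trick you used for parasitic components. This last step---ruling out holes via an energy argument---is precisely what your plan is missing; add it, and your Bonnesen route would also go through.
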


\begin{proof} We fix a chamber and denote it for simplicity just by $E$, and the disk by $B$. We will prove the lemma in four steps:
\begin{enumerate}[label=(\roman*)]
\item For $\eps$ small enough, $E$ has only one biggest (in terms of area) connected component $C_0$, which carries almost all the mass, i.e.
\[|C_0|\geq |E|(1-o(1)).
\]
In particular, if $\overline{\E_\eps}(i)$ converges to a disk $B$ as $\eps\to 0$, then $|C_0\Delta B|=o(1)$.
\item The convex hull $co(C_0)$ is sandwiched between two disks both converging to $B$ as $\eps\to 0$:
\[(1-o(1))B\subset co(C_0)\subset (1+o(1))B.
\]
\item For $\eps$ small enough the biggest connected component is in fact the only one, i.e. each chamber is connected.
\item The same conclusion as in $(ii)$ holds also for $C_0=E$ itself, namely
\[(1-o(1))B\subset E\subset (1+o(1))B.
\]
\end{enumerate}

$\mathrm{(i)}$ If there is just one connected component then we are done. Otherwise, denote by $C_0,C_1,\ldots$ the connected components of $E$ (indexed by at most countably many indices $i$, and ordered decreasingly in the area), and set $V_i:=|C_i|/|E|$ to be the normalized area of the connected component $C_i$ of $E$ . In this way $\sum_i V_i=1$. Set $M:=\max_i V_i$. If $M\leq \frac12$, by the isoperimetric inequality
\[P(E)=\sum_i P(C_i)\geq 2\sqrt{\pi}\sum_i\sqrt{|C_i|}= 2\sqrt{\pi}\sqrt{|E|}\sum_i\sqrt{V_i}\geq 4\sqrt{\pi} \sqrt{|E|}
\]
where we used that, since in this case $V_i\leq M\leq 1/2$, we have $\sqrt{V_i}\geq 2 V_i$. But by the trivial energy estimate \eqref{eq:trivial} we know that $P(E)\leq 2\sqrt{\pi}\sqrt{|E|}(1+o(1))$, so for $\eps$ small enough $M$ must be $>\frac12$, and in particular there is only one component with maximum area, $C_0$. In this case for every $i\geq 1$ we have $V_i\leq 1-M<\frac12$, and arguing as above we obtain that
\[P(E)= P(C_0)+\sum_{i\geq 1}P(C_i)\geq 2\pi\sqrt{M}+\sum_{i\geq 1} 2\pi \frac{V_i}{\sqrt{1-M}}= 2\pi\sqrt{M}+2\pi\sqrt{1-M}.
\]
Again from the energy estimate we know that each chamber has an isoperimetric deficit $o(1)$, therefore we obtain the condition
\[\sqrt{M}+\sqrt{1-M}\leq 1+o(1),
\] 
which together with the condition $\frac12 \leq M\leq 1$  easily implies that $M$ must be close to $1$, which translates to $|C_0|\geq |E|(1-o(1))$.

$\mathrm{(ii)}$ First we prove that $co(C_0)\supset (1-o(1))B$. Indeed, given a point $x\in B\backslash co(C_0)$ (if it exists, otherwise we are done), we can find a whole circular cap whose straigth segment passes through $x$ that is contained in $B\backslash co(C_0)$. The area of this circular cap is at least as big as the area of the circular cap whose straight segment is perpendicular to the radius through $x$. From point $(i)$ this area must be $o(1)$, and this easily imples the desired conclusion.

Next we prove that $co(C_0)\subset (1+o(1))B$. We use the following two standard facts for planar sets: 
\begin{enumerate}[label=(\roman*)]
\item the convex hull of an open connected set has smaller perimeter than the original set; 
\item among convex bodies in the plane, the perimeter is monotone increasing with respect to inclusion.
\end{enumerate}
From the first fact we obtain that $P(co(C_0))\leq P(B)(1+o(1))$. Now take any point $x\in co(C_0)\backslash B$. By convexity and since $co(C_0)\supset (1-o(1))B$, we obtain that $co(C_0)\supset co\big((1-o(1))B\cup \{x\}\big)$. From the second fact cited above the latter set must have smaller perimeter than $co(C_0)$, and this easily implies that $x\in (1+o(1))B$.

$\mathrm{(iii)}$ Suppose that $E$ has more than one component. From point $(ii)$ we know that all the components except for the biggest one $C_0$ have a total mass of at most $0<m=o(1)$. Then by the isoperimetric inequality and the subadditivity of the square root, their total perimeter is bigger than
\[\sum_{i\geq 1} 2\sqrt{\pi}\sqrt{|C_i|}\geq 2\sqrt{\pi}\sqrt{m}.
\]
We now remove all the smaller components, and inflate the biggest one, and prove that for $\eps$ small enough we find in this way a better competitor, which is incompatible with the supposed minimality of the original cluster. The increase in perimeter due to the inflation can be taken of order of the total removed mass $m$, see for instance \cite[Theorem~29.14]{Mag12}. The net change in perimeter is therefore $-2\sqrt\pi\sqrt m+b m$ for some constant $b$, which for $m>0$ small enough is negative; the same net change holds also for the functional $P_\eps$.  This proves that for $\eps$ small enough, and therefore $m$ small enough, there can be just one connected component for each chamber.

$\mathrm{(iv)}$ The rightmost inclusion follows immediately from $C_0\subset co(C_0)$ and point $(ii)$. We now prove the other one. From this last inclusion we know that the only obstacle would be the presence of the exterior chamber inside $B$. To exclude this we argue similarly to point $(iii)$: if there are connected components of the exterior chamber entirely surrounded by $E$, we can “fill” them with the set $E$, and then perform a deflation of the set, which for $m$ small enough results in a net decrease in the functional $P_\eps$. If instead there are “tentacles” of the exterior chamber which come from the outside, that is components not entirely surrounded by $E$, by similar considerations they must be contained in the complement of $(1-o(1))B$, and we are done.
\end{proof}

\subsection{There is eventually at most one contact between any pair of chambers}\label{subsec:contacts}

Next we shall prove that when $\eps$ is small enough, there is at most one arc in common between two different chambers.
\begin{definition}\label{def:equiv}
Given a set $E\subset \R^2$, we set $B_E$ as the disk of the same area (say, centered at the origin), $r_E:=\sqrt{|E|/\pi}$ as its radius and $\kappa_E=1/r_E$ as the curvature of $\partial B_E$.
\end{definition}
Recall that the interface between the chambers $\E_\eps(i)$ and $\E_\eps(j)$ is $\E_\eps(i,j)=\partial \E_\eps(i)\cap \partial \E_\eps(j)$, and that $\overline{\E_\eps}$ denotes a minimizer for $P_\eps$.

\begin{lemma}\label{lemma:kappabound} 
The curvature of the interface arcs $\overline{\E_\eps}(i,j)$ converges up to sign as $\eps\to 0$ to:
\begin{enumerate}[label=(\roman*)]
\item $\kappa_{\E(i)}$ if $j=0$;
\item $\frac12(\kappa_{\E(i)}-\kappa_{\E(j)})$ if $i,j\neq 0$.
\end{enumerate} 
\end{lemma}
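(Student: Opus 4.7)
The plan is to read off the arc curvatures from the Euler--Lagrange conditions for the constrained minimization, and then to pin down the resulting Lagrange multipliers using the Localization Lemma. By first variation of $P_\eps(\E)-\sum_{i=1}^N \lambda_i |\E(i)|$, there exist pressures $\lambda_i^\eps$ (setting $\lambda_0^\eps\equiv 0$) such that every arc of $\overline{\E_\eps}(i,j)$ has constant signed curvature equal to $\pm(\lambda_i^\eps-\lambda_j^\eps)/c_{ij}(\eps)$. In particular every arc in $\overline{\E_\eps}(i,0)$ has curvature $\pm \lambda_i^\eps$, and every arc in $\overline{\E_\eps}(i,j)$ with $i,j\neq 0$ has curvature $\pm(\lambda_i^\eps-\lambda_j^\eps)/(2-\eps)$. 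The whole lemma therefore reduces to showing $|\lambda_i^\eps|\to 1/r_{\E(i)}$ for every $i\geq 1$.

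To identify the limit of $\lambda_i^\eps$, I would localize to a "clean" piece of $\partial B_i$. Fix $i$, let $B_i$ be the limiting disk and $r_i=r_{\E(i)}$ its radius. By Lemma \ref{lemma:localizationsimple} applied to every chamber, $\partial \overline{\E_\eps}(i)$ lies in a Hausdorff $o(1)$-neighborhood of $\partial B_i$, and each interface $\overline{\E_\eps}(i,j)$ with $j\neq 0$ lies in a Hausdorff $o(1)$-neighborhood of $\partial B_i\cap \partial B_j$, which is either empty or the single tangency point of $B_i$ and $B_j$. Moreover by Lemma \ref{lemma:triplepoints} every triple point of the minimizer involves two interior chambers and the exterior, so the triple points on $\partial \overline{\E_\eps}(i)$ are also confined to $o(1)$-neighborhoods of these tangency points.

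Now pick any $p\in \partial B_i$ which is not a tangency with any other limiting disk (such $p$ exist as the tangency set is finite), and a fixed $\delta>0$ so small that $B(p,2\delta)$ is disjoint from every tangency point. For $\eps$ small enough, $\partial \overline{\E_\eps}(i)\cap B(p,\delta)$ is then a single smooth arc entirely of type $(i,0)$, so of constant curvature $\pm\lambda_i^\eps$; moreover it lies in an $o(1)$-Hausdorff neighborhood of the arc $\partial B_i\cap B(p,\delta)$, whose length is bounded below by a positive constant depending only on $\delta$ and $r_i$. A short compactness argument (a sequence of circular arcs with curvatures $\kappa_n$ and lengths bounded below converging in Hausdorff distance to an arc of radius $r_i$ must satisfy $|\kappa_n|\to 1/r_i$, else a subsequence would converge to a circular arc of the wrong radius, a segment, or a point) gives $|\lambda_i^\eps|\to 1/r_i$, proving (i). Statement (ii) then follows at once from $\kappa_{ij}^\eps=\pm(\lambda_i^\eps-\lambda_j^\eps)/(2-\eps)\to \pm\tfrac12(1/r_i-1/r_j)$.

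The main technical nuisance I anticipate is justifying the existence of the pressures $\lambda_i^\eps$: this is a standard Lagrange multiplier argument for clusters, but one has to verify that the area variations are nondegenerate on minimizers and then handle the sign bookkeeping once outward normals for each chamber are fixed. The final geometric comparison near $p$ is essentially elementary once the ball $B(p,\delta)$ has been isolated from all tangency points and triple points.
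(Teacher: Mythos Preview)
Your proposal is correct and follows essentially the same strategy as the paper. The paper reduces (ii) to (i) via the vertex condition $(2-\eps)\kappa_{ij}^\eps+\kappa_{j0}^\eps+\kappa_{0i}^\eps=0$ from Theorem~\ref{thm:regularity} (which is exactly your pressure relation rewritten at a triple point), and then proves (i) by the same localization argument: away from the finitely many shrinking contact zones the boundary of $\overline{\E_\eps}(i)$ consists of $(i,0)$-arcs sandwiched between $(1-o(1))\partial B_i$ and $(1+o(1))\partial B_i$, forcing their curvature to converge to $1/r_i$.
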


\begin{proof}
By Theorem \ref{thm:regularity} (regularity) we know that the weighted curvatures sum to zero around any vertex:
\[
(2-\eps)\kappa_{ij}^\eps+\kappa_{j0}^\eps+\kappa_{0i}^\eps=0.
\]
It is therefore sufficient to prove $(i)$. This follows from the localization lemma \ref{lemma:localizationsimple}: since each chamber $\overline{\E_\eps}(i)$ is sandwiched between two concentric disks whose radii converge to the same value as $\eps \to 0$, contacts between different chambers can happen in a finite number of zones whose diameter converge to zero. In the complement of these zones there will be only arcs of constant curvature, without triple points. Since each one of these arcs is sandwiched between two concentric disks converging to the same disk, the curvature must converge to the limit curvature $\kappa_{\E_\eps(i)}=\kappa_{\B(i)}$.
\end{proof}

\begin{lemma}\label{lemma:onearc}
\leavevmode
\begin{enumerate}[label=(\roman*)]
\item The length of every interface between any pair of chambers goes to $0$ as $\eps\to 0$, that is
\[\lim_{\eps\to 0}\H^1(\overline{\E_\eps}(i,j))=0;
\]
\item for $\eps$ small enough, any pair of chambers of $\overline{\E_\eps}$ share at most one arc, that is $\overline{\E_\eps}(i,j)$ has at most one connected component. If the two chambers converge to non-tangent disks, then they eventually share no boundary.
\end{enumerate}
\end{lemma}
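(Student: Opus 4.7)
The plan is to prove (i) as a direct consequence of the localization lemma, and to handle (ii) in two steps: the non-tangent case by the same localization, and the ``at most one arc'' assertion by a topological surgery combined with the area-fixing Euler--Lagrange equation.

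For (i), Lemma \ref{lemma:localizationsimple} implies that $\partial \overline{\E_\eps}(i)$ lies inside the thin annular region $(1+o(1))B_i \setminus (1-o(1))B_i$, so the interface $\overline{\E_\eps}(i,j)$ is contained in the intersection of two such annular regions. When $B_i$ and $B_j$ are disjoint this intersection is empty for $\eps$ small, which already gives the last assertion of (ii). When $B_i$ and $B_j$ are tangent at a point $p$, the intersection is contained in a ball around $p$ of radius $o(1)$. Since by Theorem \ref{thm:regularity} the interface consists of finitely many circular arcs, and the curvatures are bounded (Lemma \ref{lemma:kappabound}), each arc has length at most a constant times the distance between its endpoints, hence $o(1)$. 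Summing gives (i).

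For the ``at most one arc'' part I argue by contradiction: suppose two arcs $\gamma_1, \gamma_2$ of $\overline{\E_\eps}(i,j)$ coexist. By Lemma \ref{lemma:localizationsimple} each chamber is connected; being sandwiched between two concentric disks it is also simply connected with Jordan-curve boundary. Gluing the closures of the two chambers along $\gamma_1 \cup \gamma_2$ produces a topological annulus; its inner boundary encloses a ``hole'' of positive area (positivity is forced by Lemma \ref{lemma:triplepoints}, since if $\gamma_1, \gamma_2$ shared endpoints then four arcs would meet at one vertex). By localization this hole is filled only by a bounded component of the exterior chamber, and its boundary splits as $\alpha \cup \beta$ with $\alpha \subset \overline{\E_\eps}(i,0)$ and $\beta \subset \overline{\E_\eps}(j,0)$, meeting at two triple points of type $(0,i,j)$.

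I then test two competitor variations. In (a) I fill the hole (of area $A$) with chamber $i$: the arc $\alpha$ disappears from the cluster boundary, $\beta$ is reclassified as part of the $(i,j)$-interface, and the added area is compensated by a standard area-fixing deformation of chamber $i$ which, by the Euler--Lagrange condition at the minimizer, contributes to first order $-\Lambda_i A$, with Lagrange multiplier $\Lambda_i = \kappa_{i0}^\eps \to 1/r_i > 0$ (Lemma \ref{lemma:kappabound}); the net weighted-perimeter change is thus $-|\alpha|+(1-\eps)|\beta|-\Lambda_i A+O(A^2)$. Variation (b) is symmetric, with $i$ and $j$ swapped. Minimality forces both quantities to be nonnegative; summing them eliminates $|\alpha|$ and $|\beta|$ and leaves
\[
-\eps\bigl(|\alpha|+|\beta|\bigr) - (\Lambda_i + \Lambda_j)A \ \geq\ O(A^2),
\]
which is impossible for small $\eps$ since the left-hand side is strictly negative of order $A$. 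The main subtlety is setting up the topological picture cleanly (simply connected chambers, annular union, exterior-only hole via localization) and correctly identifying the Lagrange multiplier as the positive outer curvature, so that the compensation term carries exactly the sign needed to close the contradiction.
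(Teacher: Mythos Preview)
Your argument for (ii) is sound and close in spirit to the paper's. Both proofs fill the exterior ``hole'' trapped between the two chambers with one of them and then deflate to restore the area. The paper does it in a single stroke: fill with the chamber $B$ whose interface with the hole is longer ($\ell_B\geq\ell_A$), so the weighted perimeter near the hole changes from $\ell_A+\ell_B$ to $(2-\eps)\ell_A$, a drop of $\ell_B-\ell_A+\eps\ell_A>0$, while the deflation along the outer convex arc only helps. You instead write down the two symmetric filling inequalities and add them; this works, but is a slightly longer path to the same contradiction. Your identification of the area-fixing cost with $-\Lambda_i A$, $\Lambda_i=\kappa_{i0}^\eps\to 1/r_i>0$, is the precise version of what the paper phrases as ``the deflation can be chosen so that the energy decreases''.

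There is, however, a genuine gap in your proof of (i). You correctly bound the length of each individual interface arc by $o(1)$ (bounded curvature, endpoints in a ball of radius $o(1)$), but the step ``Summing gives (i)'' is unjustified: Theorem~\ref{thm:regularity} yields finitely many arcs for each fixed $\eps$, with no uniform bound on their number as $\eps\to 0$. A sum of $n_\eps$ terms each of size $o(1)$ need not be $o(1)$ if $n_\eps\to\infty$. The paper sidesteps this by using lower semicontinuity of the perimeter rather than an arc-by-arc count: assuming $\H^1(\overline{\E_{\eps_h}}(i,j))\geq c>0$ along a subsequence, one gets for any small closed neighbourhood $K$ of the tangency point
\[
P(B_i,K^c)+c\;\leq\;\liminf_h P(\overline{\E_{\eps_h}}(i),K^c)+\liminf_h P(\overline{\E_{\eps_h}}(i),K)\;\leq\;\liminf_h P(\overline{\E_{\eps_h}}(i))\;=\;P(B_i),
\]
a contradiction for $K$ small. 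This argument is indifferent to how many arcs make up the interface. Alternatively, since your proof of (ii) does not actually rely on (i), you could reverse the order: establish (ii) first, and then (i) is immediate because a single arc confined to a ball of radius $o(1)$ with bounded curvature has length $o(1)$.
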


\begin{proof}
$\mathrm{(i)}$ 
This is a consequence of the localization lemma \ref{lemma:localizationsimple} and the lower semicontinuity of the perimeter. If two chambers converge to two non tangent disks, then the interface is eventually empty by the localization lemma and we are done. Otherwise, consider the case where the two limit disks have a tangency point $p$, and suppose by contradiction that for a sequence $\eps_h\to 0$ it holds $\H^1(\overline{\E_{\eps_h}}(i,j))\geq c>0$. Notice that again by the localization lemma, the interface is contained in a curved wedge that as $\eps\to 0$ converges to the point $p$. Since $\overline{\E_\eps}(i)\to B_i$, for every closed neighbourhood $K$ of $p$ we have by semicontinuity
\[P(B_i,K^c)\leq \liminf_{h\to\infty} P(\overline{\E_{\eps_h}}(i),K^c).
\]
Adding the inequality 
\[c\leq \H^1(\overline{\E_{\eps_h}}(i,j))\leq \liminf_{h\to\infty} P(\overline{\E_{\eps_h}}(i),K)
\]
we obtain
\begin{align*}
P(B_i,K^c)+c& \leq \liminf_{h\to\infty} P(\overline{\E_{\eps_h}}(i),K^c) +\liminf_{h\to\infty} P(\overline{\E_{\eps_h}}(i),K)\\
&\leq \liminf_{h\to\infty} P(\overline{\E_{\eps_h}}(i))\\
&=P(B_i)
\end{align*}
which yields a contradiction by choosing the neighbourhood $K$ small enough.

$\mathrm{(ii)}$ Suppose there is a component $C$ of the exterior chamber entirely surrounded by two other chambers $A$ and $B$. We prove that it is more convenient to add this component to one of the chambers and fix its total volume with a slight deflation. Call $\ell_A$ and $\ell_B$ the length of the respective interfaces with $C$, and suppose $\ell_A\leq \ell_B$. Then add $C$ to chamber $B$, and slightly deflate $B$ far from contact zones (which is always possible for small $\eps$). The contributions to $P_\eps$ coming from $C$ change from $\ell_A+\ell_B$ to $(2-\eps)\ell_A$, with a total change of $\ell_A-\ell_B-\eps\ell_A<0$, while the deflation to fix the total volume of $B$ can be chosen so that the energy decreases; this results in a global decrease in the energy $P_\eps$. 
\end{proof}

\subsection{An asymptotic quantitative isoperimetric inequality involving curvature}\label{sec:isop}
The aim of the following theorem is to prove a particular instance of quantitative isoperimetric inequality in the plane, involving how much the curvature of the boundary of a given set $E$ deviates on small scales from the “ideal” curvature $\kappa_E$.

\begin{theorem}\label{thm:isop}
Let $E\subset\R^2$ be open, of finite area and perimeter and let $\bar\kappa>0$ be a real number. Suppose the boundary of $E$ contains $m\in\N$ portions made of arcs with constant curvature $\kappa_1,\ldots,\kappa_m$, with $\kappa_i\leq \bar\kappa$, each arc having a corresponding chord of length $\ell_i$. The curvature is signed, meaning that it is positive if the arc is curved outwards, and negative if it is curved inwards. Then
\[P(E)\geq \sqrt{4\pi|E|}+\frac{1}{24}\sum_{i=1}^m \ell_i^3 (\kappa_i-\kappa_E)^2-O\left(\sum_{i=1}^m\ell_i^5\right).
\]
\end{theorem}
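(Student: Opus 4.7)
The plan is to apply the signed-area isoperimetric inequality (Proposition~\ref{prop:isop}) to a carefully modified boundary curve. Given $E$, let $\tilde\gamma$ be the oriented curve obtained from $\partial E$ by replacing each of the $m$ special arcs, which has constant curvature $\kappa_i$ and chord length $\ell_i$, by the circular arc of curvature $\kappa_E$ sharing the same endpoints and the same orientation. Since Proposition~\ref{prop:isop} only requires rectifiability and deals with the signed enclosed area, there is no need to check that $\tilde\gamma$ stays embedded.

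The next step is a computation for a single circular arc. A standard Taylor expansion of $\ell = 2r\sin\theta$, of the arc length $2r\theta$, and of the circular-segment area $r^2(\theta-\sin\theta\cos\theta)$, around $\theta=0$ gives
\[
L(\ell,\kappa) = \ell + \frac{\kappa^2\ell^3}{24} + O(\kappa^4\ell^5), \qquad
A(\ell,\kappa) = \frac{\kappa\ell^3}{12} + O(\kappa^3\ell^5),
\]
where $A$ is signed (positive when the arc bulges outward with respect to the chosen orientation). Summing the changes arc-by-arc yields
\[
\mathrm{length}(\tilde\gamma) = P(E) + \frac{1}{24}\sum_{i=1}^m \ell_i^3(\kappa_E^2-\kappa_i^2) + O\!\left(\sum_{i=1}^m\ell_i^5\right),
\]
\[
\tilde A = |E| + \frac{1}{12}\sum_{i=1}^m \ell_i^3(\kappa_E-\kappa_i) + O\!\left(\sum_{i=1}^m\ell_i^5\right).
\]
Then Proposition~\ref{prop:isop} gives $\mathrm{length}(\tilde\gamma)\geq \sqrt{4\pi\tilde A}$. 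Expanding $\sqrt{4\pi\tilde A}$ around $|E|$ and using $\sqrt{\pi/|E|}=\kappa_E$ yields
\[
\sqrt{4\pi\tilde A} = \sqrt{4\pi|E|} + \kappa_E(\tilde A-|E|) + O\!\left((\tilde A-|E|)^2\right),
\]
and substituting gives, after rearranging,
\[
P(E) \geq \sqrt{4\pi|E|} + \sum_{i=1}^m \ell_i^3 \left[\frac{\kappa_E(\kappa_E-\kappa_i)}{12}-\frac{\kappa_E^2-\kappa_i^2}{24}\right] + \text{errors}.
\]
The algebraic identity $2\kappa_E(\kappa_E-\kappa_i)-(\kappa_E^2-\kappa_i^2)=(\kappa_E-\kappa_i)^2$ turns the bracket into exactly $\frac{1}{24}(\kappa_E-\kappa_i)^2$, which is the desired main term.

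The main obstacle is the careful bookkeeping of the error term. The expansions of $L$ and $A$ contribute $O\!\left(\sum\ell_i^5\right)$ with constants depending on $\bar\kappa$ and $\kappa_E$, which is harmless. The quadratic remainder from expanding $\sqrt{4\pi\tilde A}$ is of order $(\tilde A-|E|)^2 \lesssim \bigl(\sum\ell_i^3\bigr)^2$; this is brought into the stated form by the Cauchy--Schwarz bound $\bigl(\sum\ell_i^3\bigr)^2 \leq \bigl(\sum\ell_i\bigr)\bigl(\sum\ell_i^5\bigr)$, together with the \emph{a priori} control $\sum\ell_i \leq P(E)$, which is uniformly bounded under the standing hypotheses. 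Packaging everything gives the stated estimate.
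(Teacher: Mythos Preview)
Your proof is correct and follows essentially the same approach as the paper: replace each special arc by one of curvature $\kappa_E$, apply the signed-area isoperimetric inequality (Proposition~\ref{prop:isop}) to the modified curve, and Taylor-expand. You are in fact slightly more careful than the paper on two points---you note explicitly that embeddedness of $\tilde\gamma$ is irrelevant because Proposition~\ref{prop:isop} deals with signed areas, and you control the quadratic remainder $(\tilde A-|E|)^2\lesssim(\sum\ell_i^3)^2$ via Cauchy--Schwarz to reach the form $O(\sum_i\ell_i^5)$ actually claimed in the statement, whereas the paper writes this term as $O(\ell^6)$ with $\ell=\sum_i\ell_i$ and absorbs it without further comment.
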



We begin with a simple lemma, of which we omit the proof.

\begin{figure}
\centering
\includegraphics[width=0.4\textwidth]{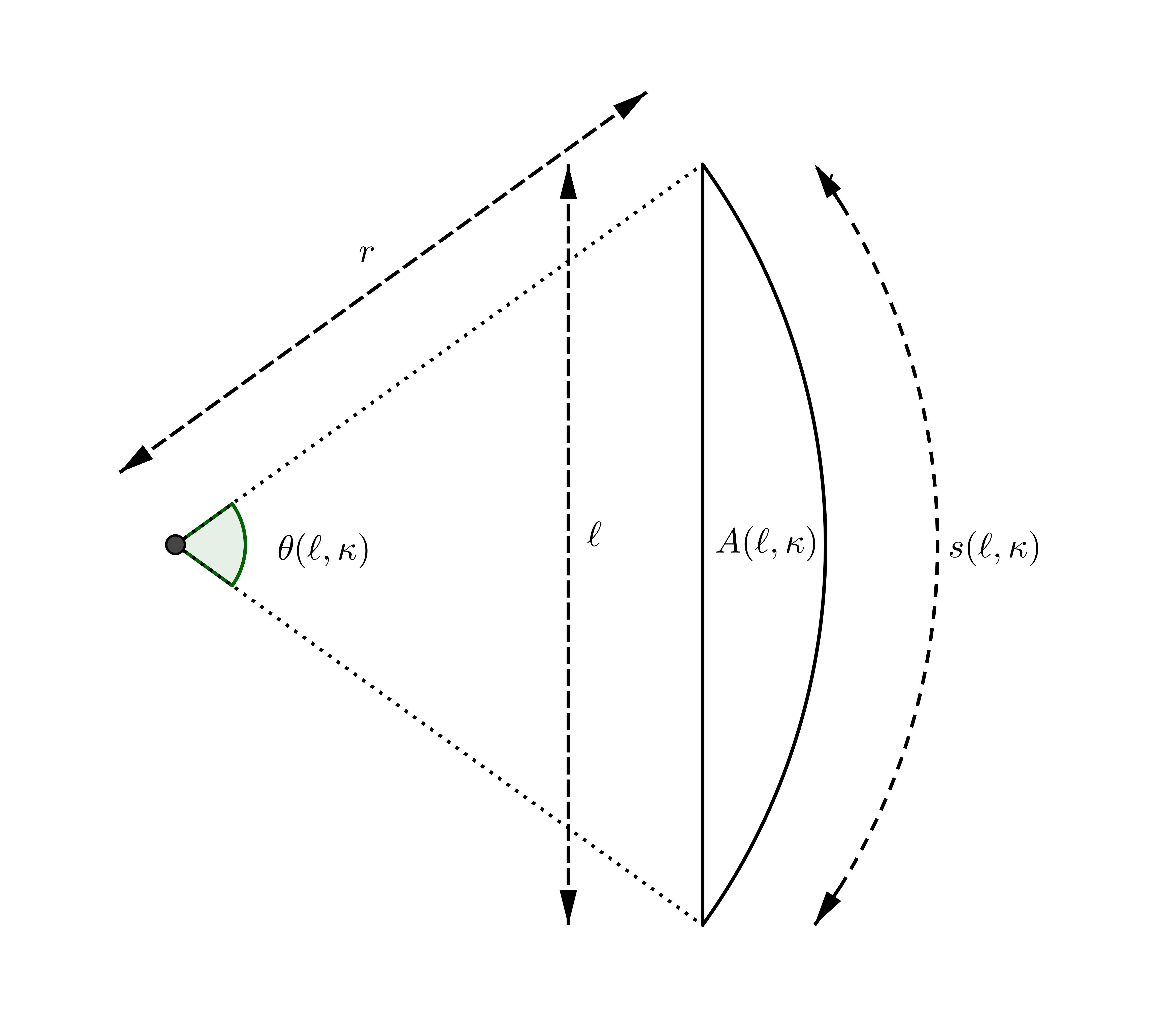}
\caption{\small{Reference figure for Lemma \ref{lemma:explicit}.}}\label{fig:explicit}
\end{figure}

\begin{lemma}\label{lemma:explicit}
Consider a segment in the plane of length $\ell$ and an arc of constant curvature $\kappa$ connecting its endpoints, and let $\theta$ and $r$ be the related angle and radius as in Figure \ref{fig:explicit}. Then the angle $\theta$, the length $s$ of the arc and the area $A$ of the circular section are given respectively by:
\begin{align*}
\theta(\ell,\kappa)& =2\arcsin{\frac{\ell \kappa}{2}} =\ell\kappa+\frac{1}{24}\ell^3\kappa^3 +O(\ell^5\kappa^5)\\
s(\ell,\kappa)&=\frac{2}{\kappa}\arcsin\left(\frac{\ell \kappa}{2}\right)=\ell+\frac{1}{24}\ell^3\kappa^2+O(\ell^5\kappa^3)\\
A(\ell,\kappa)&=\frac{\theta r^2}{2}-r^2\cos\frac{\theta}{2}\sin\frac{\theta}{2}=\frac{1}{12}\ell^3\kappa +O(\ell^5\kappa^3).
\end{align*}
\end{lemma}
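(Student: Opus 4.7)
The plan is to derive the three quantities by elementary circle geometry in closed form and then expand using the Taylor series of $\arcsin$ and $\sin$. Set $r = 1/\kappa$ and let the chord subtend a central angle $\theta$ at the center of the circle carrying the arc. Dropping the perpendicular from the center to the chord gives a right triangle with hypotenuse $r$ and opposite side $\ell/2$, so $\sin(\theta/2) = \ell\kappa/2$, which yields $\theta = 2\arcsin(\ell\kappa/2)$ immediately. The arc length is then $s = r\theta = \theta/\kappa$, and the circular section has area equal to the area of the sector minus the area of the isoceles triangle with the chord as base: $A = \tfrac{1}{2}r^2\theta - r^2\sin(\theta/2)\cos(\theta/2) = \tfrac{r^2}{2}(\theta - \sin\theta)$, matching the formula in the statement after using $2\sin(\theta/2)\cos(\theta/2)=\sin\theta$.

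Next I would plug in the Taylor expansion $\arcsin(x) = x + \tfrac{x^3}{6} + O(x^5)$ at $x = \ell\kappa/2$ to get
\[
\theta = 2\left(\tfrac{\ell\kappa}{2} + \tfrac{1}{6}\left(\tfrac{\ell\kappa}{2}\right)^{3} + O((\ell\kappa)^5)\right) = \ell\kappa + \tfrac{1}{24}\ell^3\kappa^3 + O(\ell^5\kappa^5),
\]
which is the first expansion. Dividing by $\kappa$ gives $s = \ell + \tfrac{1}{24}\ell^3\kappa^2 + O(\ell^5\kappa^3)$, the second. For the third, I use $\theta - \sin\theta = \tfrac{\theta^3}{6} + O(\theta^5)$, so
\[
A = \tfrac{1}{2\kappa^2}\left(\tfrac{\theta^3}{6} + O(\theta^5)\right) = \tfrac{\theta^3}{12\kappa^2} + O\!\left(\tfrac{\theta^5}{\kappa^2}\right),
\]
and substituting $\theta = \ell\kappa + O(\ell^3\kappa^3)$ (so that $\theta^3 = \ell^3\kappa^3 + O(\ell^5\kappa^5)$ and $\theta^5/\kappa^2 = O(\ell^5\kappa^3)$) produces $A = \tfrac{1}{12}\ell^3\kappa + O(\ell^5\kappa^3)$.

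There is no real obstacle; the argument is a direct trigonometric identification followed by bookkeeping of the remainder terms. The only point requiring a little attention is making sure all error estimates are uniform in the regime $\ell\kappa$ small (which is implicit since the arc must close with a chord, forcing $\ell\kappa \leq 2$), and confirming that the $O(\ell^5\kappa^5)$, $O(\ell^5\kappa^3)$ and $O(\ell^5\kappa^3)$ remainders absorb the next-order contributions from both the $\arcsin$ expansion and, for $A$, the $\theta - \sin\theta$ expansion after substitution.
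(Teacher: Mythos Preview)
Your proposal is correct and is precisely the elementary derivation the paper has in mind: the paper explicitly omits the proof of this lemma, and your argument---identifying $\sin(\theta/2)=\ell\kappa/2$ from the right triangle, reading off $s=r\theta$ and $A=\tfrac{r^2}{2}(\theta-\sin\theta)$, and then expanding via the Taylor series of $\arcsin$ and $\sin$---is the standard computation that fills the gap.
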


We now pass to the proof of Theorem \ref{thm:isop}.

\begin{proof}[Proof of Theorem \ref{thm:isop}]
We inflate or deflate each arc until it reaches curvature $\kappa_E$, that is we replace the given arcs of curvature $\kappa_i$ with an arc of curvature $\kappa_E$ with the same endpoints, to obtain a new set $\tilde E$ with area $A(\tilde E)=A(E)+\Delta A$ and perimeter $P(\tilde E)=P(E)+\Delta P$; then we apply the isoperimetric inequality of Proposition \ref{prop:isop} to the new set $\tilde E$ and draw the consequences for the original set $E$, exploiting the explicit fomulas given by the previous lemma. We set for simplicity $\ell=\sum_{i=1}^m\ell_i$. By Lemma \ref{lemma:explicit} we can compute explicitly

\noindent\begin{minipage}{.5\linewidth}
\begin{align*}
\Delta P&=\sum_{i=1}^m \left(s(\ell_i,\kappa_E)-s(\ell_i ,\kappa_i)\right)\\
&=\sum_{i=1}^m\frac{1}{24} \ell_i^3(\kappa_i^2-\kappa_E^2)+O\left(\ell^5\right)
\end{align*}
\end{minipage}
\begin{minipage}{.5\linewidth}
\begin{align*}
\Delta A&=\sum_{i=1}^m\left( A(\ell_i,\kappa_E)-A(\ell_i, \kappa_i)\right)\\
&=\sum_{i=1}^m\frac{1}{12}\ell_i^3(\kappa_E-\kappa_i)+O\left(\ell^5\right).
\end{align*}
\end{minipage}
The isoperimetric inequality applied to $\tilde E$ gives $P(\tilde E)\geq \sqrt{4\pi|\tilde E|}$. Therefore
\begin{align*}
P(E) =P(\tilde E)-\Delta P &\geq \sqrt{4\pi} \sqrt{|E|+\Delta A} -\Delta P\\
	& = \sqrt{4\pi |E|}\sqrt{1+\frac{\Delta A}{|E|}}-\Delta P\\
	& =\sqrt{4\pi |E|}\left(1+\frac12\frac{\Delta A}{|E|}\right)+O\left(\frac{\Delta A}{|E|}\right)^2-\Delta P\\
	& = \sqrt{4\pi|E|} +\kappa_E \Delta A-\Delta P+O(\ell^6).
\end{align*}
Inserting now the asymptotic expansions for $\Delta A$ and $\Delta P$ we obtain
\begin{align*}
P(E)& \geq \sqrt{4\pi|E|}+\sum_{i=1}^m\left(\kappa_E\frac{1}{12} \ell_i^3 (\kappa_E-\kappa_i)-\frac{1}{24}\ell^3(\kappa_e-\kappa_i)^2\right)-O\left(\ell^5\right)\\
	& = \sqrt{4\pi |E|}+\sum_{i=1}^m\frac{1}{24}\ell_i^3(\kappa_i-\kappa_E)^2-O\left(\ell^5\right).
\end{align*}
\end{proof}

\subsection{Consequences for the \texorpdfstring{$N$}{N}-bubble: lower-bound inequality}\label{sec:liminf}

We will now draw the consequences of Theorem \ref{thm:isop} in the general case of weighted clusters with possibly different areas, obtaining the lower bound for the energy $P_\eps$ given by Proposition \ref{prop:keyineq}. We find it useful, however, to examine first the simpler case of a double bubble with equal areas, to explain the idea behind it. In particular, we will obtain the asymptotics given by \eqref{eq:explicit} as a lower bound, without using the explicit shape of minimizers.

\begin{proposition}
For every $2$-cluster $\E=(E_1,E_2)$ with both areas equal to $\pi$ we have
\[P_\eps(\E)\geq 4\pi-\frac43 \eps^{3/2}-O(\eps^{5/2}).
\]
\end{proposition}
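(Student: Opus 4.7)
The plan is to reduce to a minimizer $\bar\E_\eps$ of $P_\eps$ under the area constraint, whose existence is standard: since $P_\eps(\E)\geq P_\eps(\bar\E_\eps)$ for every competitor, it suffices to prove the bound for $\bar\E_\eps$. For $\eps$ small enough, Proposition \ref{prop:round} together with Lemmas \ref{lemma:localizationsimple} and \ref{lemma:onearc} give a rigid structure on $\bar\E_\eps$: each chamber $E_i:=\bar\E_\eps(i)$ is connected and its boundary consists of at most two constant-curvature arcs, namely the interface arc of signed curvature $\kappa_{12}^{(i)}$ (with $\kappa_{12}^{(2)}=-\kappa_{12}^{(1)}$) and the outer arc of curvature $\kappa_i$, both joining the two triple points along a common chord of length $\ell$. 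If the limit disks happen to be disjoint the interface vanishes eventually and $P_\eps(\bar\E_\eps)=P(E_1)+P(E_2)\geq 4\pi$ trivially, so I may assume $\ell>0$.

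Next I apply Theorem \ref{thm:isop} to each chamber, noting $\kappa_{E_i}=1$ since $|E_i|=\pi$, and sum the two inequalities. The crucial algebraic identity $(k-1)^2+(-k-1)^2=2k^2+2\geq 2$ reflects the fact that the interface cannot simultaneously match the ideal curvature $+1$ from both sides, because its signed curvature flips sign between $E_1$ and $E_2$; this produces an unavoidable universal penalty:
$$P(E_1)+P(E_2)\geq 4\pi+\frac{\ell^3}{12}-O(\ell^5).$$
Substituting into the identity $P_\eps(\bar\E_\eps)=P(E_1)+P(E_2)-\eps s_{12}$ and using $s_{12}=\ell+O(\ell^3)$ from Lemma \ref{lemma:explicit}, I obtain $P_\eps(\bar\E_\eps)\geq 4\pi+g(\ell)-O(\ell^5)-O(\eps\ell^3)$ where $g(\ell)=\ell^3/12-\eps\ell$. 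The convex function $g$ is minimized at $\ell^{*}=2\sqrt{\eps}$ with minimum value $-\frac{4}{3}\eps^{3/2}$, supplying the leading-order term.

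The main obstacle will be absorbing the error terms $O(\ell^5)$ and $O(\eps\ell^3)$ into $O(\eps^{5/2})$, which requires an a priori bound $\ell=O(\sqrt{\eps})$. I plan to extract this by feeding the displayed lower bound against the trivial upper bound $P_\eps(\bar\E_\eps)\leq 4\pi$ (obtained by testing against two disjoint unit disks), which forces $\ell^3/12\leq\eps\ell+O(\ell^5)$ and hence $\ell=O(\sqrt{\eps})$ for small enough $\eps$, closing the argument.
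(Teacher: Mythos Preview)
Your proof is correct and follows essentially the same route as the paper: reduce to a minimizer, invoke the structural lemmas to get a single interface arc, apply Theorem~\ref{thm:isop} to each chamber, exploit the identity $(\kappa-1)^2+(-\kappa-1)^2\ge 2$, and minimize $\tfrac{1}{12}\ell^3-\eps\ell$.

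The one noteworthy difference is that you are more careful than the paper about the error terms: the paper simply writes $-O(\ell^5)$ and then $-O(\eps^{5/2})$ in the final line without justifying why $\ell=O(\sqrt{\eps})$, whereas you explicitly extract this a~priori bound by playing the lower bound off against the competitor estimate $P_\eps(\bar\E_\eps)\le 4\pi$. That step is indeed needed (and works, using also $\ell\to 0$ from Lemma~\ref{lemma:onearc}(i) to absorb $C\ell^5$ into $\tfrac{1}{24}\ell^3$), so your write-up closes a small gap that the paper leaves implicit. One minor point: to apply Theorem~\ref{thm:isop} with uniform $O(\ell^5)$ constants you need the interface curvature bounded, which is supplied by Lemma~\ref{lemma:kappabound}; you might mention this explicitly.
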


\begin{proof} It is clearly sufficient to prove the statement when $\E$ is a minimizer of $P_\eps$ under the same volume constraint.
By Lemma \ref{lemma:onearc} we know that the chambers will have at most one single arc in common. Suppose this arc has length $s$ and curvature $\kappa$,  and that the chord of this arc has length $\ell$ . Then writing 
\[P_\eps(\E)=P(E_1)+P(E_2)-\eps s,
\]
recalling Lemma \ref{lemma:explicit} and applying Theorem \ref{thm:isop} to both chambers we obtain
\begin{align*}
P_\eps(\E)&\geq 4\pi+\frac{1}{24}\ell^3\left((1-\kappa)^2+(1+\kappa)^2\right)-\eps \left(\ell+\frac{1}{24}\ell^3\kappa^2 +O(\ell^5)\right) -O(\ell^5)\\
& = 4\pi+\frac{1}{12}\ell^3-\eps \ell+\frac{1}{12}\kappa^2 \ell^3\left(1-\frac{\eps}{2}\right)-O(\ell^5)\\
& \geq 4\pi+\frac{1}{12}\ell^3-\eps \ell-O(\ell^5)
\end{align*}
where the key fact is that the curvature $\kappa$ appears in the first line once with a positive sign and once with a negative sign, and where the last inequality follows from $\eps\leq 2$. We now optimize in $\ell\geq 0$ the expression $\tfrac{1}{12} \ell^3-\eps \ell$ to obtain the minimum point $\ell=2\eps^{1/2}$, and thus obtaining
\[P_\eps(\E)\geq 4\pi -\frac43 \eps^{3/2}-O(\eps^{5/2})
\]
as wanted.
\end{proof}

We will now perform a computation similar to the previous one, but this time for a general number $N$ of chambers and possibly different areas, to obtain a lower bound for the energy $P_\eps$.

\begin{proposition}\label{prop:keyineq}
Let $\E=\{E_1,\ldots,E_N\}$ be a planar cluster whose chambers have areas $|E_i|=m_i=\pi r_i^2$ and therefore ideal curvature $\kappa_{E_i}=1/r_i$ (see Definition \ref{def:equiv}), and whose boundaries have piecewise constant curvature. Suppose that every pair of chambers shares at most one arc. 
Then
\begin{equation}\label{eq:keyineq}
P_\eps(\E)\geq \sum_{i=1}^N P(B_{E_i})-\frac43 \eps^{3/2} \sum_{1\leq i<j\leq N} \sigma_{ij}\frac{2r_ir_j}{r_i+r_j}+O(\eps^{5/2})
\end{equation}
where $\sigma_{ij}$ is one if the chambers $E_i$ and $E_j$ share some boundary, and zero otherwise.
\end{proposition}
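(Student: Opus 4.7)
The plan is to generalize the two-chamber warm-up directly to $N$ chambers with different radii. First I would rewrite
\[
P_\eps(\E) = \sum_{i=1}^N P(E_i) - \eps \sum_{1 \leq i < j \leq N} \sigma_{ij}\, s_{ij},
\]
where $s_{ij}$ is the arclength of the (unique, by hypothesis) interface between $E_i$ and $E_j$. Then, for each chamber $E_i$, I would apply Theorem \ref{thm:isop} using only the arcs separating $E_i$ from other interior chambers, and expand $s_{ij}$ in powers of the chord length $\ell_{ij}$ via Lemma \ref{lemma:explicit}. After summing over $i$, the contributions from the two sides of each interface $(i,j)$ combine, and I would then optimize pair by pair in the two free parameters $\kappa_{ij}$ and $\ell_{ij}$ to extract the claimed coefficient $\tfrac{4}{3}\eps^{3/2}\tfrac{2r_ir_j}{r_i+r_j}$.

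The key algebraic observation is that the signed curvature of a common interface seen from opposite sides satisfies $\kappa_{ij}^{(j)} = -\kappa_{ij}^{(i)}$; setting $\kappa_{ij} := \kappa_{ij}^{(i)}$ for $i<j$, the quadratic deficits from Theorem \ref{thm:isop} on the two sides add up to $(\kappa_{ij} - 1/r_i)^2 + (\kappa_{ij} + 1/r_j)^2$. Combining with $-\eps s_{ij}$ via Lemma \ref{lemma:explicit}, each contact $(i,j)$ contributes
\[
\Phi_{ij} = \tfrac{1}{24}\ell_{ij}^3\!\left[(2-\eps)\kappa_{ij}^2 + 2\!\left(\tfrac{1}{r_j} - \tfrac{1}{r_i}\right)\kappa_{ij} + \tfrac{1}{r_i^2} + \tfrac{1}{r_j^2}\right] - \eps\,\ell_{ij} + O(\ell_{ij}^5).
\]
Since the coefficient of $\kappa_{ij}^2$ is positive for $\eps<2$, minimizing first in $\kappa_{ij}$ produces the effective factor $\tfrac{(1/r_i+1/r_j)^2}{2} + O(\eps)$ in front of $\tfrac{1}{24}\ell_{ij}^3$. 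Optimizing next the cubic-minus-linear profile in $\ell_{ij}$ yields the critical chord $\ell_{ij}^* = \tfrac{4\sqrt\eps\, r_ir_j}{r_i+r_j}$ and the optimal value $-\tfrac{4}{3}\eps^{3/2}\tfrac{2r_ir_j}{r_i+r_j}$, which matches the statement exactly and reduces to the known double-bubble answer when $r_i=r_j=1$.

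The hard part will be absorbing the error terms uniformly across chambers. At the critical scale $\ell_{ij}^*\sim\sqrt\eps$ the $O(\ell_{ij}^5)$ from Theorem \ref{thm:isop} and the $O(\eps\ell_{ij}^3)$ generated by the $\eps\kappa_{ij}^2$ term dropped in the minimization are both $O(\eps^{5/2})$, consistent with the claim provided the implicit constants depend only on the radii $r_1,\ldots,r_N$. When instead some $\ell_{ij}$ is not of order $\sqrt\eps$, I would observe that the positive cubic $\tfrac{(1/r_i+1/r_j)^2}{48}\ell_{ij}^3$ dominates and the pair's net contribution already beats $-\tfrac{4}{3}\eps^{3/2}\tfrac{2r_ir_j}{r_i+r_j}$, so the inequality becomes automatic on that pair; splitting the pairs into these two regimes keeps all error terms controlled. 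Summing the pairwise lower bounds with the trivial contributions $\sum_i P(B_{E_i})$ then gives the desired inequality.
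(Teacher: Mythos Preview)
Your proposal is correct and follows essentially the same route as the paper: rewrite $P_\eps(\E)=\sum_i P(E_i)-\eps\sum_{i<j}s_{ij}$, apply Theorem~\ref{thm:isop} to each chamber, regroup the terms pair by pair using the sign flip $\kappa_{ij}^{(j)}=-\kappa_{ij}^{(i)}$, then minimize first in $\kappa_{ij}$ and afterwards in $\ell_{ij}$ to produce the coefficient $-\tfrac{4}{3}\eps^{3/2}\tfrac{2r_ir_j}{r_i+r_j}$. Your two-regime treatment of the error (near-critical versus far-from-critical $\ell_{ij}$) is in fact slightly more explicit than the paper's proof, which simply observes that $O(\ell^5)=O(\eps^{5/2})$ at the optimizer without separately discussing what happens when $\ell_{ij}$ is not of order $\sqrt\eps$.
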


\begin{proof}
Call $\kappa_{ij}$ the curvature of the arc between chambers $i$ and $j$, $s_{ij}$ its length and $\ell_{ij}$ the length of the relative chord (we omit for simplicity the dependence on $\eps$), and set  $\ell=\sum_{i,j} \ell_{ij}$. We apply Theorem \ref{thm:isop} to each chamber to obtain
\begin{align*}
P_\eps(\E)&=\sum_{i=1}^N P(E_i)-\eps\sum_{1\leq i<j\leq N} s_{ij}\\
&\geq \sum_{i=1}^N \left(P(B_{E_i})+\frac{1}{24}\sum_{j\neq i} \ell_{ij}^3(\kappa_{ij}-\kappa_{E_i})^2-O(\ell^5)\right)-\eps\sum_{1\leq i<j\leq N} s_{ij}\\
& \begin{aligned}
=\sum_{i=1}^N P(B_{E_i})+\sum_{1\leq i<j\leq N} &\left(\frac{1}{24}\ell_{ij}^3 \left( (\kappa_{ij}-\kappa_{E_i})^2+(\kappa_{ij}+\kappa_{E_j})^2\right)\right.\\
& \left. \qquad-\eps\left(\ell_{ij}+\frac{1}{24}\ell_{ij}^3\kappa_{ij}^2\right)\right) -O(\ell^5).\end{aligned}
\end{align*}
Now we first optimize in $\kappa_{ij}$ each term in the sum, i.e. the quadratic polynomial in $\kappa_{ij}$ given by
\begin{align*}
& \frac{1}{24}\ell_{ij}^3 \left((\kappa_{ij}-\kappa_{E_i})^2+(\kappa_{ij}+\kappa_{E_j})^2\right)-\eps\left(\ell_{ij}+\frac{1}{24}\ell_{ij}^3\kappa_{ij}^2\right)\\
=& \frac{1}{24}\ell_{ij}^3\left((2-\eps)\kappa_{ij}^2+2(\kappa_{E_j}-\kappa_{E_i})\kappa_{ij}+\kappa_{E_i}^2+\kappa_{E_j}^2\right)-\eps \ell_{ij}.
\end{align*}
The minimum point is easily seen to be
\begin{equation}\label{eq:optimalkappa}
\kappa_{ij}=\frac{\kappa_{E_i}-\kappa_{E_j}}{2-\eps}
\end{equation}
giving the expression a minimum value of
\begin{align}
&\frac{1}{24}\ell_{ij}^3\left(\kappa_{E_i}^2+\kappa_{E_j}^2+\frac{(\kappa_{E_i}-\kappa_{E_j})^2}{2-\eps}\right)-\eps\ell_{ij}\nonumber\\
 =&\frac{1}{24}\ell_{ij}^3\left(\frac12(\kappa_{E_i}+\kappa_{E_j})^2-\frac{\eps}{4-2\eps}(\kappa_{E_i}-\kappa_{E_j})^2\right)-\eps\ell_{ij}.\label{eq:optimize}
\end{align}
We now optimize in $\ell_{ij}$: setting the derivative in $\ell_{ij}$ equal to zero we find
\begin{align*}
\ell_{ij}^2&=\frac{8\eps}{\frac12(\kappa_{E_i}+\kappa_{E_j})^2-\frac{\eps}{4-2\eps}(\kappa_{E_i}-\kappa_{E_j})^2}\\
& =\frac{16\eps }{(\kappa_{E_i}+\kappa_{E_j})^2}+O(\eps^{2})
\end{align*}
which implies
\begin{equation}\label{eq:optimalell}
\ell_{ij}=\frac{4}{\kappa_{E_i}+\kappa_{E_j}}\eps^{1/2}+O(\eps^{3/2}).
\end{equation}
Substituting this back into \eqref{eq:optimize} and observing that by the previous computation $O(\ell^5)=O(\eps^{5/2})$, we obtain that the expression is greater than
\[-\frac{8}{3}\eps^{3/2}\frac{1}{\kappa_{E_i}+\kappa_{E_j}}+O(\eps^{5/2})
\]
and now summing among all pairs $(i,j)$ we obtain
\[P_\eps(\overline{\E_\eps})\geq \sum_{i=1}^NP(B_{E_i})-\frac{4}{3}\eps^{3/2}\!\!\sum_{1\leq i<j\leq N}\sigma_{ij}\frac{2}{\kappa_{E_i}+\kappa_{E_j}}+O(\eps^{5/2})
\]
which is the desired result.

\end{proof}

As a consequence of the previous inequality and Lemmas \ref{lemma:localizationsimple} and \ref{lemma:onearc} we obtain the following:

\begin{corollary}\label{cor:keyineq}
Suppose minimizers $\overline{\E_\eps}$ converge as $\eps\to 0$ to the cluster of disks $\B$. Then
\[P(\overline{\E_\eps})\geq P(\B)-\frac43\eps^{3/2}\T(\B)+O(\eps^{5/2})
\]
where $\T$ is the tangency functional \eqref{eq:tangency}.
\end{corollary}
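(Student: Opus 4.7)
The corollary should follow by applying Proposition \ref{prop:keyineq} directly to the minimizer $\overline{\E_\eps}$ (reading $P$ in the statement as $P_\eps$, as fits the context), and then upgrading the tangency sum produced by that proposition to the limiting tangency functional $\T(\B)$ via the localization results of Subsections \ref{subsec:localization}--\ref{subsec:contacts}.

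First I would verify that Proposition \ref{prop:keyineq} applies to $\overline{\E_\eps}$ for $\eps$ sufficiently small. The boundary is made of arcs of constant curvature by Theorem \ref{thm:regularity}, and Lemma \ref{lemma:onearc}(ii) ensures that any two chambers share at most one arc. Moreover the area constraint $|\overline{\E_\eps}(i)|=m_i=|B_i|$ forces the ``ideal'' disk $B_{\overline{\E_\eps}(i)}$ of Definition \ref{def:equiv} to have the same radius $r_i$ as $B_i$, so
\[
\sum_{i=1}^N P(B_{\overline{\E_\eps}(i)})\;=\;\sum_{i=1}^N P(B_i)\;=\;P(\B).
\]
Plugging this into \eqref{eq:keyineq} gives
\[
P_\eps(\overline{\E_\eps})\;\geq\;P(\B)\;-\;\frac{4}{3}\eps^{3/2}\!\!\!\sum_{1\leq i<j\leq N}\!\!\sigma_{ij}^{\overline{\E_\eps}}\frac{2r_ir_j}{r_i+r_j}\;+\;O(\eps^{5/2}),
\]
where $\sigma_{ij}^{\overline{\E_\eps}}=1$ iff the chambers $\overline{\E_\eps}(i)$ and $\overline{\E_\eps}(j)$ share some boundary.

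It remains to dominate the sum on the right by $-\T(\B)=\sum_{i<j}\sigma_{ij}^{\B}\frac{2r_ir_j}{r_i+r_j}$. The decisive input is the second half of Lemma \ref{lemma:onearc}(ii): if $B_i$ and $B_j$ are not tangent (so $\sigma_{ij}^{\B}=0$), then for $\eps$ small the chambers $\overline{\E_\eps}(i)$ and $\overline{\E_\eps}(j)$ share no boundary, so $\sigma_{ij}^{\overline{\E_\eps}}=0$. Hence $\sigma_{ij}^{\overline{\E_\eps}}\leq\sigma_{ij}^{\B}$ for every pair, and since $N$ is finite this holds simultaneously for all pairs once $\eps$ is below a common threshold. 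Substituting this monotonicity into the previous display gives the claim.

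There is no substantial obstacle here: the corollary is really a bookkeeping step that packages Proposition \ref{prop:keyineq} with the earlier localization and one-arc results. The only point worth emphasizing is the monotonicity $\sigma_{ij}^{\overline{\E_\eps}}\leq\sigma_{ij}^{\B}$, which converts the (a priori $\eps$-dependent) tangency sum on the minimizer into the static tangency functional on the limit cluster.
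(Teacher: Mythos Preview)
Your proposal is correct and matches the paper's own argument, which is not spelled out beyond the sentence ``As a consequence of the previous inequality and Lemmas \ref{lemma:localizationsimple} and \ref{lemma:onearc}''. You have simply made explicit the two points the paper leaves implicit: that the hypotheses of Proposition \ref{prop:keyineq} hold for $\overline{\E_\eps}$ (constant-curvature arcs from regularity, at most one shared arc from Lemma \ref{lemma:onearc}(ii)), and that the $\eps$-dependent tangency indicators $\sigma_{ij}^{\overline{\E_\eps}}$ are dominated by the limiting $\sigma_{ij}^{\B}$ via the second half of Lemma \ref{lemma:onearc}(ii).
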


\begin{remark}[Non-optimal lower bound for \texorpdfstring{$P_\eps$}{Peps}]
Viewing an $N$-cluster as a “superposition” of $2$-clusters we can obtain a worse lower bound than equation \eqref{eq:keyineq}, but with the same order of $\eps^{3/2}$ for the second term. We notice that for $N\geq 2$ we can rewrite
\[P_\eps(\E)=\frac{1}{N-1}\sum_{1\leq i<j\leq N} P_{\delta(\eps)}\big((\E(i),\E(j))\big)
\]
where $\delta(\eps)=(N-1)\eps$ and $P_\delta((\E(i),\E(j)))=P(\E(i))+P(\E(j))-\delta \H^1(\E(i,j))$ is the weighted perimeter of the $2$-cluster $(\E(i),\E(j))$. From the solution of the double bubble (for simplicity in the case of equal volumes $|\E(i)|=\pi$) we know that $P_\delta((E_i,E_j))\geq 4\pi-\frac43\delta^{3/2}+O(\delta^{5/2})$ from which
\begin{align*}
P_\eps(\E)&=\frac{1}{N-1}\sum_{1\leq i<j\leq N} P_{\delta(\eps)}((E_i,E_j))\\
& \geq \frac{1}{N-1}\sum_{1\leq i<j\leq N}\left( 4\pi-\sigma_{ij}(\E)\frac43\delta^{3/2}+O(\delta^{5/2})\right)\\
& = 2N\pi -\frac43\sqrt{N-1}\C(\E)\eps^{3/2}+O(\eps^{5/2})
\end{align*}
where $\C(\E)=\sum_{i<j} \sigma_{ij}(\E)$ is the number of pairs $(i,j)$ such that $E_i$ and $E_j$ share some boundary. This is the estimate we are looking for, except for the factor $\sqrt{N-1}$ which makes the inequality worse. Observe that we can not obtain in this way the optimal inequality we are aiming to: indeed each double-bubble inequality is optimal when there is just one contact between two disks and the remaining portion of boundary is circular, which can not be  simultaneously true for all pairs of bubbles.
\end{remark}

\subsection{Sharpness of lower bound (recovery sequence)}\label{subsec:recovery}
We now want to show that the inequality proved in Corollary \ref{cor:keyineq} is essentially sharp, which means that, given a cluster of disks $\B=(B_1\ldots, B_N)$, we can actually find a sequence of clusters $\E_\eps$ converging to $\B$ for which the reverse inequality holds. We think that there should be a simpler way to do this other than the way proposed in the following, analyzing the sharpness of the inequality of Theorem \ref{thm:isop}, which is used to prove Proposition \ref{prop:keyineq}. However we were not able to follow this route and instead propose in the following a quite explicit and tedious computation for the polar equation of each chamber of an approximating sequence. 

The idea is to construct between any pair of tangent disks $B_i$, $B_j$ in the limit cluster $\B$ an arc whose constant curvature is $\frac12(\kappa_{E_i}-\kappa_{E_j})$ (which is the right asymptotic value given by condition $(ii)$ in Lemma \ref{lemma:kappabound}), of length $\ell_{ij}^\eps=4\eps^{1/2}/(\kappa_{E_i}+\kappa_{E_j})$ (which is up to $O(\eps^{3/2})$ the optimal value found in \eqref{eq:optimalell}). In the remaining portion of the boundaries of the chambers $\E_\eps(i)$ we can pretty much put any interface which, in polar coordinates w.r.t. the center of $B_i$, has $W^{1,\infty}$ norm at most $O(\eps^2)$; we achieve this by a simple two-piece piecewise linear interpolation in the angle variable. Recall that the total area must be $|B_i|$ to satisfy the area constraint, hence the need for an interpolation.

We start with a couple of simple lemmas regarding the area and perimeter of small perturbations of a circle. We parametrize $\S^1$ by $\gamma:[-\pi,\pi]\to \R^2$,
\[\gamma(t)=\binom{\cos t}{\sin t}
\]
and consider a normal perturbation with magnitude $u:[-\pi,\pi]\to (-1,\infty)$, which gives a variation
\[\gamma_u(t)=\gamma(t)+u(t)\nu(t)=(1+u(t))\gamma(t)
\]
where $\nu(t)=\gamma(t)$ is the outer normal. Using the formulas for the area in polar coordinates and for the length of a curve we obtain the following results, of which we omit the proof of the first.
\begin{lemma}[Variation of area]\label{lemma:taylorarea}
\begin{equation}\label{eq:taylorarea}
Area( \gamma_u)=\pi+\int_{-\pi}^\pi u(t)dt+\frac12\int_{-\pi}^\pi u(t)^2dt.
\end{equation}
\end{lemma}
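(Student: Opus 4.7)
The plan is to apply the classical polar-coordinate formula for the area enclosed by a closed curve. Observe that $\gamma_u(t) = (1+u(t))(\cos t,\sin t)$ is exactly the polar graph $r(t) = 1 + u(t)$ parametrized by the angle $t\in[-\pi,\pi]$. Since the hypothesis $u > -1$ guarantees $r(t) > 0$ throughout, the curve $\gamma_u$ is a simple closed loop winding once around the origin in the positive direction, so the polar area formula applies to give
\[
Area(\gamma_u) = \frac{1}{2}\int_{-\pi}^{\pi} r(t)^2\,dt = \frac{1}{2}\int_{-\pi}^{\pi} (1+u(t))^2\,dt.
\]
Expanding the square and splitting the integral into three pieces yields $\frac{1}{2}\int_{-\pi}^{\pi} 1\,dt = \pi$, plus $\int_{-\pi}^{\pi} u(t)\,dt$, plus $\frac{1}{2}\int_{-\pi}^{\pi} u(t)^2\,dt$, which is exactly the claimed identity.

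There is no real obstacle here: the formula is an exact equality (not an asymptotic expansion in $u$), and the only subtlety is to justify the application of the polar area formula, which follows immediately from $r(t) > 0$ together with the periodicity of $\gamma_u$. If one preferred a more hands-on derivation, the same result could be obtained by writing the enclosed region as a union of infinitesimal circular sectors of area $\frac{1}{2}r(t)^2\,dt$ with vertex at the origin, or alternatively by a change of variables $x = \rho\cos t$, $y = \rho\sin t$ in the integral $\int_{\{\rho < 1 + u(t)\}} dx\,dy = \int_{-\pi}^{\pi}\int_0^{1+u(t)} \rho\,d\rho\,dt$, which produces the same expression after integrating in $\rho$.
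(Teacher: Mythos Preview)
Your proof is correct and is exactly the approach the paper intends: the text explicitly says the lemma follows from ``the formulas for the area in polar coordinates'' and omits the proof, and your application of $Area=\tfrac12\int r^2\,dt$ with $r=1+u$ is precisely that computation.
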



\begin{lemma}[Variation of perimeter]\label{lemma:taylorperimeter}
If $u(t)\geq -\frac12$ for every $t$ and $\|u\|_{W^{1,\infty}}\leq 1$, then the length $L(\gamma_u)$ of the curve $\gamma_u$ satisfies
\begin{equation}\label{eq:taylorperimeter}
L(\gamma_u)=2\pi+\int_{-\pi}^\pi u(t)dt +\frac12 \int_{-\pi}^\pi u'(t)^2 dt+O\big(\|u\|_{W^{1,\infty}}^3\big).
\end{equation}
\end{lemma}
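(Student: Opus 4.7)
My plan is a direct calculation: parametrize explicitly, compute the integrand of the length, and Taylor expand.

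First I would write $\gamma_u(t) = (1+u(t))(\cos t, \sin t)$. Differentiating gives
\[
\gamma_u'(t) = u'(t)(\cos t, \sin t) + (1+u(t))(-\sin t, \cos t),
\]
and since the two vectors on the right are orthonormal,
\[
|\gamma_u'(t)|^2 = u'(t)^2 + (1+u(t))^2.
\]
Hence
\[
L(\gamma_u) = \int_{-\pi}^{\pi}\sqrt{(1+u(t))^2 + u'(t)^2}\,dt.
\]

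Next I would Taylor-expand the function $f(v,w) := \sqrt{(1+v)^2 + w^2}$ around $(v,w)=(0,0)$. Writing $(1+v)^2 + w^2 = 1 + (2v + v^2 + w^2)$ and using $\sqrt{1+x} = 1 + x/2 - x^2/8 + O(x^3)$, the linear and quadratic terms yield
\[
f(v,w) = 1 + v + \tfrac{1}{2}w^2 + R(v,w),
\]
where the cross terms of order $v^2$ coming from $\frac{(2v+v^2+w^2)^2}{8}$ cancel the $\frac{v^2}{2}$ contribution exactly, leaving a remainder $R(v,w)$ of order at least three. The hypotheses $u(t)\geq -\tfrac{1}{2}$ and $\|u\|_{W^{1,\infty}}\leq 1$ guarantee that $(v,w) = (u(t),u'(t))$ stays inside a fixed compact subset of the open set where $f$ is real-analytic (namely $\{(1+v)^2 + w^2 \geq 1/4\}$), so by Taylor's theorem with remainder there is a universal constant $C$ such that $|R(v,w)| \leq C(|v|+|w|)^3$ on that region.

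Finally I would integrate pointwise in $t$:
\[
L(\gamma_u) = \int_{-\pi}^\pi\!\!\Big(1 + u(t) + \tfrac{1}{2}u'(t)^2\Big)\,dt + \int_{-\pi}^\pi R(u(t),u'(t))\,dt,
\]
which gives $2\pi + \int u\,dt + \tfrac{1}{2}\int (u')^2\,dt$ for the main part. For the remainder, the pointwise bound $|R(u(t),u'(t))| \leq C(|u(t)|+|u'(t)|)^3 \leq 8C\|u\|_{W^{1,\infty}}^3$ yields
\[
\Bigl|\int_{-\pi}^\pi R(u(t),u'(t))\,dt\Bigr| \leq 16\pi C\,\|u\|_{W^{1,\infty}}^3 = O\big(\|u\|_{W^{1,\infty}}^3\big),
\]
completing the proof. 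There is no real obstacle here; the only mild care needed is in checking that the quadratic Taylor coefficient of $f$ in the $v$ direction vanishes (so the expansion is $v + \tfrac{1}{2}w^2$ rather than $v + \tfrac{1}{2}v^2 + \tfrac{1}{2}w^2$), which is the analytic counterpart of the fact that radial dilations of the circle increase perimeter linearly to leading order.
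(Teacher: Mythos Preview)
Your proof is correct and follows essentially the same route as the paper: compute $|\gamma_u'(t)|=\sqrt{(1+u)^2+u'^2}$, set $x=2u+u^2+u'^2$, Taylor-expand $\sqrt{1+x}$ to second order, observe that the $u^2$ contributions cancel, and bound the remainder using $u\ge -\tfrac12$ and $\|u\|_{W^{1,\infty}}\le 1$ to stay in a region where the Lagrange remainder is uniformly $O(|x|^3)=O(\|u\|_{W^{1,\infty}}^3)$. The only cosmetic difference is that you package the expansion as a two-variable Taylor formula for $f(v,w)=\sqrt{(1+v)^2+w^2}$, whereas the paper keeps everything in the single variable $x$; the computations coincide line by line.
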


\begin{proof}
We have
\[\gamma_u'(t)=(1+u(t))\gamma'(t)+u'(t)\gamma(t)
\]
and by the orthogonality of $\gamma$ and $\gamma'$ we obtain
\[|\gamma_u'(t)|=\sqrt{(1+u(t))^2+u'(t)^2}=\sqrt{1+2u(t)+u(t)^2+u'(t)^2}.
\]
By the Taylor expansion with Lagrange remainder
\[\sqrt{1+x}=1+\frac12 x-\frac18 x^2+r(x)
\]
with $r(x)= \frac{1}{16(1+\xi)^{5/2}}x^3$ and $\xi$ between $1$ and $x$. Set $x=2u(t)+u(t)^2+u'(t)^2$. From $u(t)\geq-\frac12$ we obtain $x\geq -\frac34$, and then also $\xi\geq -\frac34$, thus $|r(x)|\leq C|x|^3$ for every $x\geq-\frac34$.  Therefore
\begin{align*}
L(\gamma_u)&=\int_{-\pi}^\pi |\gamma_u'(t)|dt \\
& =\int_{-\pi}^\pi \left(1+u+\frac12(u^2+u'^2)-\frac18(2u+u^2+u'^2)^2+r(u)\right)dt\\
&=2\pi+\int_{-\pi}^\pi u +\frac12 \int_{-\pi}^\pi u'^2+ O\big(\|u\|_{W^{1,\infty}}^3\big). 
\end{align*}
\end{proof}

\begin{remark} In particular consider a variation $u$ which preserves the area, that is
\[\int u=-\frac12\int u^2.
\]
Then plugging this into \eqref{eq:taylorperimeter} we obtain that for an area-preserving variation the perimeter is
\[2\pi+\frac12 \int(u'^2-u^2)+O\big(\|u\|_{W^{1,\infty}}^3\big).
\]
\end{remark}

\begin{lemma}
Consider a circle of radius $r$ centered at the origin and given $R\in\R$ consider a circle of radius $|R|$ tangent to the first one  whose center has cartesian coordinates $(r+R,0)$, (so that if $R$ is positive it is on the opposite side with respect to the tangent line, if $R$ is negative it is on the same side). Then the polar coordinates of the second circumference in a neighbourhood of the tangency point are given by:
\[\rho(\theta)=(R+r)\cos\theta-R\sqrt{1-\left(1+\frac{r}{R}\right)^2\sin^2\theta},
\]
and the Taylor expansion for small $\theta$ is
\[\rho(\theta)=r+\frac{r}{2}\left(1+\frac{r}{R}\right)\theta^2+O(\theta^4).
\]
\end{lemma}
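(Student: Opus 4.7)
The statement is a direct algebraic computation, so the plan is essentially to set up the right equation, solve a quadratic, pick the correct branch of the square root, and then Taylor-expand.

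First I would translate the geometric data into a cartesian equation. The second circle has center $(r+R,0)$ and radius $|R|$, so a point with polar coordinates $(\rho\cos\theta,\rho\sin\theta)$ lies on it iff
\[
(\rho\cos\theta-(r+R))^2+(\rho\sin\theta)^2=R^2,
\]
which I would expand into the quadratic
\[
\rho^2-2(r+R)\cos\theta\,\rho+(r+R)^2-R^2=0.
\]
Solving for $\rho$ by the quadratic formula gives
\[
\rho=(r+R)\cos\theta\pm\sqrt{(r+R)^2\cos^2\theta-(r+R)^2+R^2},
\]
and the discriminant simplifies (using $\cos^2\theta-1=-\sin^2\theta$) to $R^2-(r+R)^2\sin^2\theta=R^2\bigl(1-(1+r/R)^2\sin^2\theta\bigr)$, so the square root equals $|R|\sqrt{1-(1+r/R)^2\sin^2\theta}$.

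Next I would select the correct branch. In a neighbourhood of the tangency point $\theta=0$ we must have $\rho(0)=r$. At $\theta=0$ the two roots reduce to $(r+R)\pm|R|$, which equal $\{r,\,r+2R\}$ if $R>0$ and $\{r+2R,\,r\}$ if $R<0$. In either sign of $R$ the value $r$ is obtained by choosing the sign that makes $\pm|R|=-R$, which lets us rewrite the chosen root uniformly as
\[
\rho(\theta)=(r+R)\cos\theta-R\sqrt{1-(1+r/R)^2\sin^2\theta},
\]
independently of the sign of $R$. This is the first claim.

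Finally, I would Taylor-expand. With $a=(1+r/R)^2$, using $\cos\theta=1-\theta^2/2+O(\theta^4)$, $\sin^2\theta=\theta^2+O(\theta^4)$ and $\sqrt{1-a\theta^2+O(\theta^4)}=1-\tfrac{a}{2}\theta^2+O(\theta^4)$, I compute
\[
\rho(\theta)=(r+R)\Bigl(1-\tfrac{\theta^2}{2}\Bigr)-R\Bigl(1-\tfrac{a}{2}\theta^2\Bigr)+O(\theta^4)
=r+\tfrac{\theta^2}{2}\bigl(Ra-(r+R)\bigr)+O(\theta^4).
\]
Since $Ra=(r+R)^2/R$, the bracket equals $(r+R)\bigl((r+R)/R-1\bigr)=r(r+R)/R=r(1+r/R)$, yielding exactly
\[
\rho(\theta)=r+\tfrac{r}{2}\bigl(1+\tfrac{r}{R}\bigr)\theta^2+O(\theta^4).
\]
There is no real obstacle here: the only point worth being careful about is the uniform branch choice across the sign of $R$, which is handled by the observation above that picking the minus sign in front of $R\sqrt{\cdots}$ works in both cases because the square root already carries $|R|$, not $R$.
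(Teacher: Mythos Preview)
Your proof is correct and follows essentially the same approach as the paper: set up the quadratic in $\rho$ from the polar (equivalently cartesian) equation of the second circle, solve it, select the branch giving $\rho(0)=r$, and Taylor-expand. You are simply more explicit than the paper, which just quotes the polar equation of an off-center circle and says ``solving for $\rho$ (and choosing the right sign) gives the desired conclusion''; your careful handling of the branch choice uniformly in the sign of $R$ and the explicit Taylor computation fill in details the paper leaves to the reader.
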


\begin{proof}
The polar equation of a circumference of radius $R$ whose center has polar coordinates $(r_0,\phi)$ is given by
\[\rho^2+r_0^2-2\rho r_0 \cos(\theta-\phi)=R^2.
\]
In our case $(r_0,\phi)=(r+R,0)$. Inserting this into the previous equation and solving for $\rho$ (and choosing the right sign) gives the desired conclusion.
\end{proof}

\begin{theorem}[Recovery sequence]\label{thm:recovery}
For every cluster of disks $\B=(B_1,\ldots,B_N)$ with radii $r_1,\ldots,r_N$  we can construct a recovery sequence $\E_\eps$, namely a sequence such that $\E_\eps\to \B$ in the convergence of clusters and such that
\[P_\eps(\E_\eps)=\sum_{i=1}^N 2\pi r_i-\frac43\eps^{3/2}\sum_{1\leq i<j\leq N}\sigma_{ij}\frac{2r_ir_j}{r_i+r_j}+O(\eps^{5/2})
\]
where
\[\sigma_{ij}=\begin{cases}1&\text{ if $B_i$ and $B_j$ touch }\\
0& \text{ otherwise}
\end{cases}.
\]
\end{theorem}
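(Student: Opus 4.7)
My plan is to construct $\E_\eps$ explicitly by realizing at each tangency of $\B$ the leading-order critical values identified in the proof of Proposition \ref{prop:keyineq}: namely $\kappa_{ij}^\eps:=\tfrac12(1/r_i-1/r_j)$ and $\ell_{ij}^\eps:=\tfrac{4r_ir_j}{r_i+r_j}\eps^{1/2}$. For each pair $(i,j)$ with $\sigma_{ij}=1$, I insert near the tangency point $p_{ij}$ a circular arc of that curvature and chord length, whose chord passes through $p_{ij}$ perpendicular to the line of centers; this arc becomes the interior interface $\E_\eps(i,j)$. The circles $\partial B_i,\partial B_j$ are cut in the small corresponding angular sectors about $p_{ij}$; by the polar-coordinates lemma the endpoints of the tangency arc differ from $\partial B_i$ (in polar coordinates about the center $c_i$) by $O(\eps)$ radially and span an angular sector of width $O(\eps^{1/2})$.

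On the residual angular set $S_i\subset[-\pi,\pi]$ of each $\partial B_i$, I prescribe the exterior portion of $\partial\E_\eps(i)$ as the radial graph $\rho(\theta)=r_i+u_i(\theta)$ with $u_i$ a simple piecewise-linear function chosen so that (i) at the endpoints of each tangent sector it matches the polar coordinates of the adjacent tangency-arc endpoint, and (ii) the area constraint $|\E_\eps(i)|=\pi r_i^2$ is satisfied exactly, using Lemma \ref{lemma:taylorarea}. A short computation with Lemma \ref{lemma:explicit} shows that each tangency changes the area of chamber $i$ by $\Delta A_i^{(ij)}=\tfrac{1}{12}(\ell_{ij}^\eps)^3(\kappa_{ij}^\eps-\kappa_{E_i})+O(\eps^{5/2})=O(\eps^{3/2})$, so the area identity forces $\int u_i=-\Delta A_i/r_i+O(\eps^3)=O(\eps^{3/2})$. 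A two-piece linear profile with $O(\eps)$ values at the tangent-sector endpoints and a compensating bump of matching amplitude over a $\Theta(1)$ angular sector achieves this with $\|u_i\|_{W^{1,\infty}}=O(\eps)$, so that $\int u_i'^2=O(\eps^2)$ and the cubic remainder $O(\|u_i\|_{W^{1,\infty}}^3)=O(\eps^3)$.

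Finally I compute $P_\eps(\E_\eps)$ contribution by contribution. The interior interfaces give $(2-\eps)\sum_{i<j}\sigma_{ij}\, s(\ell_{ij}^\eps,\kappa_{ij}^\eps)$; by Lemma \ref{lemma:taylorperimeter} scaled to a circle of radius $r_i$, the exterior parts give $\sum_i\bigl[r_i|S_i|+\int u_i+\tfrac{1}{2r_i}\int u_i'^2\bigr]+O(\|u_i\|_{W^{1,\infty}}^3)$. Expanding via Lemma \ref{lemma:explicit}, the $O(\ell_{ij}^\eps)$ pieces from $r_i|S_i|$ and from $(2-\eps)s(\ell_{ij}^\eps,\kappa_{ij}^\eps)$ combine into $-\eps\,\ell_{ij}^\eps$ per tangency; the $O((\ell_{ij}^\eps)^3)$ pieces, after substituting $\int u_i=-\Delta A_i/r_i+O(\eps^3)$ from the area constraint, reproduce exactly the minimum of the quadratic expression \eqref{eq:optimize} evaluated at $\kappa_{ij}=\kappa_{ij}^\eps$, i.e.\ $-\tfrac{8r_ir_j}{3(r_i+r_j)}\eps^{3/2}$ per tangency; all remaining terms are $O(\eps^{5/2})$. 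Summing over tangencies and adding $\sum_i 2\pi r_i$ yields the stated asymptotic. The main obstacle is precisely this tight bookkeeping at order $\eps^{3/2}$: four distinct sources (the $\ell$ piece of $s(\ell,\kappa_{ij}^\eps)$, the "missing arc" correction from $r_i|S_i|$, the area-correction $\int u_i$, and the segment-area defect $\Delta A_i$) all contribute at this order and must be verified by direct polar-coordinate computation to recombine to the optimum of Proposition \ref{prop:keyineq}'s lower bound—this is the "explicit and tedious" step the author warns about, avoidable only if one could instead argue abstractly from the saturation of the convex optimization underlying that proposition.
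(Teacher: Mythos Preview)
Your proposal is correct and follows essentially the same approach as the paper: insert arcs with the asymptotically optimal curvature $\kappa_{ij}=\tfrac12(\kappa_{E_i}-\kappa_{E_j})$ and chord $\ell_{ij}^\eps=\tfrac{4r_ir_j}{r_i+r_j}\eps^{1/2}$ at each tangency, interpolate piecewise-linearly in polar coordinates to restore the area constraint, and expand the perimeter via Lemmas~\ref{lemma:explicit} and~\ref{lemma:taylorperimeter}. The only difference is organizational: the paper parametrizes the \emph{entire} boundary (inserted arc included) as a single polar graph about $c_i$ and invokes the area-preserving identity $\int u=-\tfrac12\int u^2$ so that only $\tfrac12\int u'^2$ over the arc sector survives at order $\eps^{3/2}$, whereas you separate the arc contribution $s(\ell,\kappa_{ij})$ from the exterior graph and carry the area defect explicitly through $\int u_i$---this is equivalent bookkeeping leading to the same cancellation.
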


\begin{proof}
We build, for each disk in the limit configuration, a “dented” disk, inserting small arcs of constant curvature $\kappa_{ij}=\tfrac12\big(\tfrac{1}{r_j}-\tfrac{1}{r_i}\big)$ between two tangent disks $B_i$ and $B_j$. The length of the corresponding chord is set to be $\ell_{ij}^\eps=\tfrac{4r_ir_j}{r_i+r_j}\eps^{1/2}$ (these are the asymptotically optimal values given by the optimizations in \eqref{eq:optimalkappa} and \eqref{eq:optimalell}).

We describe the boundary of $\E_\eps(i)$ in polar coordinates w.r.t. the center of $B_i$ by the function $\rho_i(\theta)$. Around any contact point $p_{ij}=(r_i,\theta_{ij})$, thanks to the previous lemma, the parametrization is given by
\[\rho_i(\theta)=r_i+\frac{r_i}{2}\left(1+\frac{r_i}{R_{ij}}\right)(\theta-\theta_{ij})^2+O((\theta-\theta_{ij})^4)
\]
where $R_{ij}=1/\kappa_{ij}$. We now suppose for simplicity $\theta_{ij}=0$ (we are interested in computing only lengths, which are rotation invariant) and compute the polar coordinates of the endpoints of the $(i,j)$-arc, whose chord has length $\ell_{ij}^\eps$: they are given by $(\rho_i(\Delta \theta_i),\Delta \theta_i)$ and $(\rho(-\Delta \theta_{ij}),-\Delta \theta_{ij})$ where $\Delta \theta_{ij}$ is implicitly given by 
\[2\rho_i(\Delta\theta_{ij})\sin\Delta \theta_{ij}=\ell_{ij}^\eps.
\]
We now invert this expression to obtain the Taylor expansion of $\Delta\theta_{ij}$ in terms of $\ell_{ij}^\eps$: first insert the Taylor expansions of $\rho_i(\theta)$ and $\sin\theta$ to obtain
\[2\left(r_i+\frac{r_i}{2}\left(1+\frac{r_i}{R_{ij}}\right)\Delta\theta_{ij}^2+O(\Delta\theta_{ij}^4)\right)\left(\Delta\theta_{ij}-\frac16 \Delta\theta_{ij}^3+O(\Delta\theta_{ij}^5\right)=\ell_{ij}^\eps.
\]
Then a simple computation yields 
\[\Delta\theta_{ij}=\frac{\ell_{ij}^\eps}{2r_i}+O(\eps^{3/2}).
\]

Using Lemma \ref{lemma:taylorperimeter} and a rescaling, setting $(1+u(\theta))r_i=\rho_i(\theta)$, and observing that we can set the total area to be $|B_i|$ with $\rho_i(\theta)$ being piecewise linear between two consecutive arcs and having there $W^{1,\infty}$-norm bounded by a constant times $\eps^2$, we find that
\begin{align*}
P(\E_\eps(i))&=r_i\left(2\pi +\frac12 \int(u'(t)^2-u(t)^2)dt+O(\eps^{5/2})\right)\\
&= r_i\left(2\pi+\frac12\int_{-\Delta\theta_{ij}}^{\Delta\theta_{ij}} u'(t)^2dt+O(\eps^{5/2})\right)\\
&=r_i\left(2\pi+\frac12\int_{-\Delta\theta_{ij}}^{\Delta\theta_{ij}} \left(1+\frac{r_i}{R_{ij}}\right)^2t^2dt+O(\eps^{5/2})\right)\\
&=2\pi r_i+\frac{r_i}{2}\frac{(r_i+R_{ij})^2}{R_{ij}^2}\frac23 \Delta\theta_{ij}^3+O(\Delta\theta_{ij}^5)\\
&=2\pi r_i+\frac13 \frac{r_iR_{ij}}{r_i+R_{ij}}\eps^{3/2}+O(\eps^{5/2})
\end{align*}
where we used that the only relevant term up to $O(\eps^{5/2})$ in the integral is $u'(t)^2$ between $-\Delta\theta_{ij}$ and $\Delta\theta_{ij}$. Moreover, recalling Lemma \ref{lemma:explicit}, we have
\[s(\ell_{ij}^\eps,\kappa_{ij})=\ell_{ij}^\eps+O(\eps^{3/2}).
\]
Therefore summing among all the arcs of the chamber $\E_\eps(i)$ we obtain
\[P(\E_\eps(i))-\frac{\eps}{2}\sum_{j} s(\ell_{ij}^\eps,\kappa_{ij})=2\pi r_i-\frac23\eps^{3/2}\sum_{j} \frac{r_iR_{ij}}{r_i+R_{ij}}+O(\eps^{5/2}).
\]
Now summing among all $i$'s, and recalling that $\frac{1}{R_{ij}}=\frac12\left(\frac{1}{r_j}-\frac{1}{r_i}\right)$, each arc $(i,j)$ is counted with a weight given by
\begin{align*}
&-\frac23\eps^{3/2}\left(\frac{1}{\frac{1}{R_{ij}}+\frac{1}{r_i}}-\frac{1}{\frac{1}{R_{ij}}-\frac{1}{r_j}}\right)\\
=& -\frac43 \eps^{3/2} \frac{2r_i r_j}{r_i+r_j}
\end{align*}
which is the desired result.
\end{proof}

\subsection{Proof of the main theorems}\label{subsec:proofs}
We now put together the previously obtained results to prove Theorem \ref{thm:sticky} and Theorem \ref{thm:structure}.

\begin{proof}[Proof of Theorem \ref{thm:sticky}]
Given a family of minimizing clusters $\overline{\E_\eps}$ converging to $\B$, by the regularity Theorem \ref{thm:regularity} they have boundary of piecewise constant curvature. By Lemma \ref{lemma:onearc} all curvatures are bounded, and every pair of chambers $\overline{\E_\eps}(i)$ and $\overline{\E_\eps}(j)$ shares at most one arc, and shares no arc if the limit disks $B_i$ and $B_j$ are not tangent. Applying Corollary \ref{cor:keyineq} we obtain that 
\begin{equation}\label{eq:lowerbound}
P_\eps(\overline{\E_\eps})\geq \sum_{i=1}^N P(B_i)-\frac43\T(\B)\eps^{3/2}+O(\eps^{5/2}),
\end{equation}
or equivalently (recalling definition \eqref{eq:P1eps}) that
\[P_\eps^{(1)}(\overline{\E_\eps})\geq -\T(\B)+O(\eps).
\]
By Theorem \ref{thm:recovery} we can actually find a recovery sequence, that is a sequence $\E_\eps$ converging to $\B$ and such that
\[P_\eps(\E_\eps)= \sum_{i=1}^N P(B_i)-\frac43\T(\B)\eps^{3/2}+O(\eps^{5/2}),
\]
which shows the other inequality in \eqref{eq:lowerbound}. In particular, 
\[P_\eps^{(1)}(\overline{\E_\eps})= -\T(\B)+O(\eps),
\]
and in order to minimize $P_\eps$ for $\eps$ small enough, it is necessary that the limit cluster $\B$ maximizes $\T(\B)$, the number of weighted tangencies.

\end{proof} 

\begin{proof}[Proof of Theorem \ref{thm:structure}]
Theorem \ref{thm:regularity} implies that there are a finite number of arcs of constant curvature, meeting in a finite number of vertices. By Lemma \ref{lemma:triplepoints} at every vertex exactly three arcs meet, one of the chambers is the exterior one and the angle $\theta_\eps$ is given by $\theta_\eps=\arccos(1-\eps/2)$. By Lemma \ref{lemma:kappabound} the curvatures of the arcs are converging to the desired values. By Lemma \ref{lemma:onearc} there is at most one arc between any pair of chambers whose limit disks are tangent, and none otherwise. Moreover, it follows from Proposition \ref{prop:keyineq} that in the former case, for $\eps$ small enough there is \emph{exactly} one arc, otherwise we would get a worse inequality from Proposition \ref{prop:keyineq}, that is $\lim_{\eps\to 0}P_\eps^{(1)}(\overline{\E_\eps})>-\T(\B)$. Finally, the length $\ell_{ij}^\eps$ of the arc between $\overline{\E_\eps}(i)$ and $\overline{\E_\eps}(j)$ must be $o(\eps^{1/2})$-close to the optimal value given by \eqref{eq:optimalell}, otherwise again we would obtain a worse inequality.

\end{proof}

\section{Final remarks}\label{sec:remarks}

\begin{enumerate}[label=(\roman*)]
\item \emph{Higher dimension.} A natural question is whether an analogous result holds for minimizing clusters in $\R^n$, where the weights are given by \ref{eq:Peps} and the length is replaced by the Hausdorff measure $\H^{n-1}$. The first-order results of Section \ref{sec:firstorder} are true in any dimension. The proof of compactness is however more subtle, as in dimension $n\geq 3$ such a strong regularity result as Theorem \ref{thm:regularity} is not available, and moreover perimeter does not control diameter even for connected smooth  sets. The localization lemma \ref{lemma:localizationsimple} is still true but requires a different proof. The second-order results of Section \ref{sec:secondorder} seem more difficult to extend, mainly because of the lack of a strong regularity result. In the planar case we are able to make explicit computations thanks to the fact that we are dealing with arcs of constant curvature.

\item \emph{The case $\eps \to 2$.} The other natural asymptotic behaviour we could consider is for $\eps\to 2$, which is the limit for the triangle inequalities \eqref{eq:wetting} to hold. In this case for minimal clusters the union of all chambers $\bigcup_{i=1}^N\overline{\E_\eps}(i)$ converges to a disk (by the isoperimetric inequality) and the cluster converges up to subsequence and rigid motions to an optimal partition of the disk. This is much simpler to prove than the main result of this paper: in this case, setting $\alpha=2-\eps$, the relevant rescaled functionals are
\[G_\alpha(\E)=\frac{P_{2-\alpha}(\E)-(1-\alpha)2\pi \sqrt{N}}{\alpha}
\]
The lower bound inequality is an immediate consequence of the rewriting
\[G_\alpha(\E)=P(\E)+\frac{1-\alpha}{\alpha}(P(\E(0))-2\pi\sqrt{N})\geq P(\E)
\]
while its sharpness (or recovery sequence in the language of $\Gamma$-convergence) is  recovered by a constant sequence.

\item \emph{Higher order expansion.} Even though Theorem \ref{thm:sticky} highly restricts the class of possible clusters of disks we can see in the limit $\eps\to 0$, it doesn't completely characterizes them because of a general lack of uniqueness of minimizers for the tangency functional $\T$ in \eqref{eq:tangency}: in the case of equal radii already for $N=6$ there are three distinct minimizers, see Figure \ref{fig:N=6}; see also \cite{DF17} for the characterization of those $N$ which admit a unique minimizer for the sticky disk potential. For those $N$ that admit many minimizers, a way to select among them would be to go beyond the order $\eps^{3/2}$ and look at the subsequent order in the expansion of perimeter. However this seems quite difficult and apparently involves some “non-local” terms. A computation in the case of equal areas seems to suggest that the relevant quantity to be maximized at the next order is the total number of paths of length $2$ in the \emph{bond graph} associated to $\B$, that is the graph where vertices are the centers of the disks and edges are drawn when two disks touch (notice that the tangency functional is exactly the number of paths of length $1$, i.e. edges, in the same graph). However there are no rigorous results in this direction.
\end{enumerate}

\begin{figure}\label{fig:N=6}
\begin{center}
\includegraphics[width=0.6\textwidth]{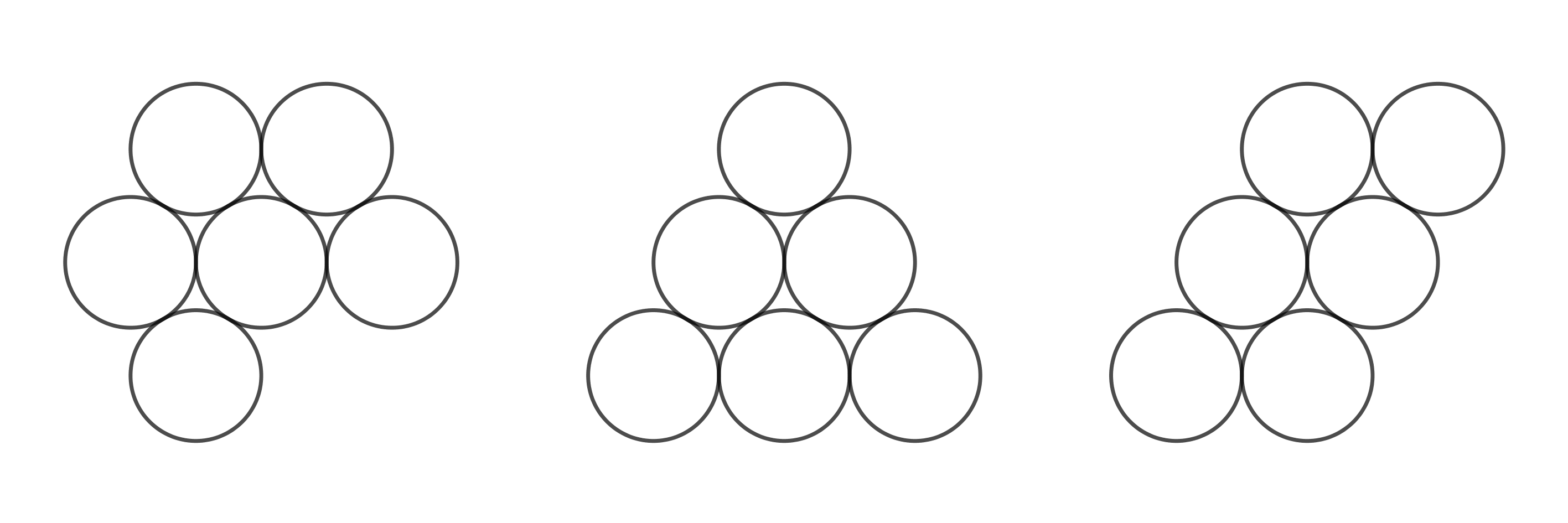}
\caption{\small{For $N=6$ and equal radii there are three distinct minimizers of the tangency functional $\T$.}}
\end{center}
\end{figure}

\section*{Acknowledgments} I want to thank Giovanni Alberti for many discussions and suggestions. I also want to thank Alessandra Pluda for the conversations from which the problem originated.

\end{document}